\newtheorem{Proposition}{Proposition}[section]
\newtheorem{Lemma}[Proposition]{Lemma}
\newtheorem{Theorem}[Proposition]{Theorem}
\newtheorem{Observation}[Proposition]{Observation}
\renewcommand{\deg}{d}
\newcommand{\emtext}[1]{\text{\em #1}}
\newcommand{\COMMENT}[1]{}
\def\minor{\preccurlyeq}
\def\Minor{\succcurlyeq}
\def\P{\mathcal{P}}
\def\Q{\mathcal{Q}}
\def\RR{\mathcal{R}}
\def\T{\mathcal{T}}
\def\interior{\mathaccent"7017\relax}
\newcommand{\qed}{\hbox{\rule{6pt}{6pt}}}
\newcommand{\D}{\ensuremath{\mathcal{D}}}
\newcommand{\V}{\ensuremath{\mathcal{V}}}
\newcommand{\W}{\ensuremath{\mathcal{W}}}
\newcommand{\boundary}{\partial}
\newenvironment{proof}%
{\noindent{\bf Proof.}\ }%
{\unskip\nobreak\hfill\penalty50\hskip2em\hbox{}\nobreak\hfill%
       $\square$\parfillskip=0pt\finalhyphendemerits=0\par\bigskip}%
\newcommand\N{\mathbb N}
\newcommand{\R}{\mathbb R}
\newcommand{\td}{tree-decom\-po\-si\-tion}
\title{{\bf The Erd\H{o}s-P\'osa property for clique minors\\ in highly connected graphs}}
\author{
{\bf Reinhard Diestel}
\\
\and
{\bf Ken-ichi Kawarabayashi}\thanks{National Institute of Informatics, 2-1-2, Hitotsubashi, Chiyoda-ku, Tokyo, Japan}\thanks{Research is partly
supported by Japan Society for the
Promotion of Science, Grant-in-Aid for Scientific Research,
by C \& C Foundation, by
Kayamori Foundation and by Inoue Research Award
for Young Scientists.}\thanks{Email address: {\tt k\_keniti@nii.ac.jp}}
\\
\and
{\bf Paul Wollan}
\thanks{
University of Rome, ``La Sapienza"
Department of Computer Science,
Via Salaria 113,
Rome 00198 Italy
}
\thanks{Email address: {\tt paul.wollan@gmail.com}}
}
\begin{document}

\date{}

\maketitle

\begin{abstract}

We prove the existence of a function $f\colon \N^2\to\N$ such that, for all $p,k\in\N$, every $(k(p-3) + 14p+14)$-connected graph either has $k$ disjoint $K_p$ minors or contains a set of at most $f(p,k)$ vertices whose deletion kills all its $K_p$ minors. For fixed~$p \ge 5$, the connectivity bound of about $k(p-3)$%
   \COMMENT{}
   is smallest possible, up to an additive constant: if we assume less connectivity in terms of~$k$, there will be no such function~$f$.
\end{abstract}

{\bf Key Words: }Erd\H{o}s-P\'osa, clique minor, packing, disjoint cycles

\section{Introduction}

A set of graphs $\mathcal{C}$ has the \emph{Erd\H{o}s-P\'osa property} if there exists a function $f = f(k)$
such that for all $k \ge 1$, any graph $G$ either contains $k$ vertex disjoint subgraphs in $\mathcal{C}$,
or there exists a subset of vertices $X \subseteq V(G)$ with $|X| \le f(k)$
such that every subgraph of $G$ in $\mathcal{C}$
intersects a vertex of $X$.  The name derives from an article of Erd\H{o}s and P\'osa \cite{erdosposa}
where they show that the set $\mathcal{C}$ of cycles has this property.

Let $G$ and $X$ be graphs.  An \emph{extension of $X$} is a graph that can be contracted
to $X$.  An \emph{instance of an $X$-minor} in~$G$ is a subgraph $H$ of $G$ isomorphic
to an extension of $X$.  The set $\mathcal{C}$ of cycles can be thought of as the set
of extensions of $K_3$, the complete graph of three vertices.  Thus
the result of Erd\H{o}s and P\'osa can be reformulated as follows:  there exists a function $f(k)$ such that
any graph $G$ either contains $k$ disjoint instances of $K_3$ as a minor, or there exists a subset of vertices
$X\subseteq V(G)$ with $|X| \le f(k)$ such that $G-X$ does not contain $K_3$ as a minor.
For any graph $H$, let  $\mathcal{C}_H$ be the set of extensions of $H$.
Robertson and Seymour \cite{RS5} have exactly characterized which graphs $H$
have the property that the set $\mathcal{C}_H$ has the Erd\H{o}s-P\'osa property:  the set
$\mathcal{C}_H$ has the Erd\H{o}s-P\'osa property if and only if
$H$ is planar.

The purpose of this article is to prove the following theorem.
\begin{Theorem}\label{thm:main}
There exists an $\N^2\to\N$ function $f$ such that, for all $p,k\in\N$, every\penalty-10000\
${(k(p-3) + 14p+14)}$-connected graph $G$ either contains $k$ disjoint instances of a $K_p$-minor or has a set $X$ of at most $f(p,k)$ vertices such that $G-X$ has no $K_p$-minor.
\end{Theorem}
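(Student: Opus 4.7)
The plan is to argue by induction on $k$. For the base case $k=1$, we need only show that any $(15p+11)$-connected graph contains a $K_p$-minor, so that $X=\emptyset$ suffices and $f(p,1)=0$; this follows from classical results of Mader, Kostochka, and Thomason forcing a $K_p$-minor once the connectivity (hence minimum degree) exceeds a bound linear in $p$.

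For the inductive step, let $G$ be $(k(p-3)+14p+14)$-connected with no $k$ disjoint $K_p$-minors. I aim to extract a single $K_p$-minor $H$ concentrated in a bounded-size region $W\supseteq V(H)$ of $G$, attached to the rest of $G$ through a set $U\subseteq W$ with $|U|\le p-3$, in the sense that $U$ separates $W\setminus U$ from $V(G)\setminus W$ in $G$. Granting this, $G-W$ is still $((k-1)(p-3)+14p+14)$-connected, provided the slack $14p+14$ is chosen large enough to absorb $|W|-(p-3)$. Applying the induction hypothesis to $G-W$: either it contains $k-1$ disjoint $K_p$-minors (which together with $H$ would contradict the standing assumption), or it admits a hitting set $X'$ of size at most $f(p,k-1)$, and setting $X:=W\cup X'$ hits every $K_p$-minor of $G$. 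This gives a recursion $f(p,k)=g(p)+f(p,k-1)$ with $g(p)$ bounded.

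The hard step is producing $(H,W,U)$. My plan is to leverage the high connectivity to guarantee a tangle of large order, and then invoke the Robertson--Seymour grid-minor theorem: either the resulting grid is so large that one can construct $k$ disjoint $K_p$-minors directly inside it (contradicting our assumption), or the tangle has bounded width, whereupon an analysis of its decomposition yields a separator of size $\le p-3$ confining a $K_p$-minor to a bounded region. The $K_p$-minor itself is built from the linked substructure inside this region by routing $O(p)$ auxiliary vertex-disjoint paths; this is what the additive slack $14p+14$ pays for, since a grid or wall alone does not give a $K_p$-minor for $p\ge 5$.

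The main obstacle I foresee is the precise count $p-3$. One must show that the global ``cost'' of extracting a $K_p$-minor in a highly connected graph is exactly $p-3$, matching the extremality the authors advertise in the abstract. I expect this to come from a Robertson--Seymour-style linkage/rerouting argument: among the $p$ branch sets of $H$, all but at most $p-3$ can be rerouted into the highly connected core of $G$, so that only $p-3$ vertices of $H$ are ``globally essential'' for separating $H$ from the rest. Arranging this rerouting so that a single virtual separator of size $p-3$ (rather than $p$) works for every choice of branch set, in the face of $K_p$ being non-planar, is where the technical heart of the argument surely lies.
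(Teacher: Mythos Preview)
Your inductive scheme cannot work as stated, for a bookkeeping reason that is not fixable. If $G$ is $(k(p-3)+14p+14)$-connected, then \emph{every} separator of $G$ has at least $k(p-3)+14p+14$ vertices; in particular there is no bounded region $W$ attached to the rest of $G$ through a set $U$ of size $\le p-3$ (unless $W\setminus U=\emptyset$ or $W=V(G)$). So the object $(H,W,U)$ you are hunting for simply does not exist. Relatedly, any $W$ that contains a $K_p$-minor has $|W|\ge p$, and deleting $W$ drops the connectivity of $G$ by $|W|\ge p>p-3$; hence $G-W$ need not be $((k-1)(p-3)+14p+14)$-connected, and the additive slack $14p+14$ cannot rescue this since it is fixed while the loss compounds over the $k$ steps of the induction. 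The intuition that ``extracting one $K_p$-minor costs $p-3$ in connectivity'' is the right heuristic for why the bound is tight, but it is not a mechanism you can iterate.

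The paper proceeds entirely differently. It splits on tree-width. When the tree-width is bounded (by a function of $p,k$), the Erd\H{o}s--P\'osa conclusion holds for \emph{all} graphs, with no connectivity hypothesis at all: a short argument on a tree-decomposition, directing edges of the tree toward sides containing a $K_p$-minor, gives either a small hitting set in a single bag or two disjoint sides each with a $K_p$-minor, and one recurses. The connectivity is used only in the large-tree-width case, where the goal is to show outright that $G$ contains $k$ disjoint $K_p$-minors. Here one applies the Robertson--Seymour excluded-minor structure theorem (in a strengthened ``rich'' form) to find a large wall together with an apex set $A$ and vortices; the high connectivity forces many wall vertices to send edges to $A$ or into a vortex, and one assembles an $F(kp,k(p-3))$-minor (a fan with $k(p-3)$ apices over a $kp$-ladder), which contains $kK_p$. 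Note in particular that a large grid alone does \emph{not} yield even a single $K_5$-minor, so your alternative ``the grid is so large that one constructs $k$ disjoint $K_p$-minors directly inside it'' also fails for $p\ge 5$; the apex set and vortex structure are essential.
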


We also show that the connectivity bound in Theorem~\ref{thm:main} is best possible, up to an additive constant, for fixed $p\ge 5$. Indeed for each~$p$ we shall find a constant $c_p$ such that for all $k,n\in\N$ there are $(k(p-3)-c_p)$-connected graphs that do not contain $k$ disjoint instances of $K_p$ as a minor but in which no set of at most $n$ vertices kills all their $K_p$ minors. Hence it is not possible to define a function $f(p,k)$ as in Theorem~\ref{thm:main} that makes the theorem true for all $(k(p-3)-c_p)$-connected graphs.

We will need the following definitions.  We write $X\minor G$ to express that $X$ is a minor of~$G$.
Given an extension $H$ of an $X$ minor in $G$, a \emph{branch set} the $X$ minor is a maximal subset of
vertices of $H$ which is contracted to a single vertex when contracting $H$ to $X$.  By $kX$ we denote the disjoint union of $k$ copies of a graph~$X$. A path starting in $x\in X$ and ending in $y\in Y$ is an \emph{$X$--$\,Y$ path} if $x$ is its only vertex in $X$ and $y$ is its only vertex in~$Y$.%
   \COMMENT{}
   A~set $\P$ of disjoint paths is a \emph{linkage}. If it consists of $X$--$\,Y$ paths and these meet all of~$X\cup Y$, it is an \emph{$X$--$Y$ linkage}. (Then $|X|=|\P|=|Y|$.) Two linkages $\mathcal{P}$ and $\mathcal{Q}$ of the same order
are \emph{equivalent} if for every $P \in \mathcal{P}$ there exists a (\emph{corresponding}) path $Q \in \mathcal{Q}$
such that $P$ and $Q$ have the same endpoints.

We recall that a \emph{tree decomposition} of a graph $G$ is a pair $(T, \mathcal{W})$
where $T$ is a tree and $\mathcal{W} = \{W_t \subseteq V(G) : t \in V(T)\}$ is a collection
of subsets of the vertices of $G$ indexed by the vertices of $T$.  Moreover,
the collection of subsets $\mathcal{W}$ satisfies the following:
\begin{itemize}
\item $\bigcup_{t \in V(T)} W_t = V(G)$,
\item for every edge $e = uv$ in $E(G)$, there exists $t \in V(T)$ such that $v, u \in W_t$, and
\item for all $v \in V(G)$, the vertices $\{t \in V(T) : v \in W_t\}$ induce a connected
subtree of $T$.
\end{itemize}
The \emph{width} of the decomposition $(T, \mathcal{W})$ is $\max_{t \in V(T)} |W_t| -1$, and
the \emph{tree-width} of a graph $G$ is the minimum width of a tree decomposition of $G$.  A \emph{path
decomposition} is simply a tree decomposition where the graph $T$ is a path.  Given
a path decomposition $(P, \mathcal{W})$ where the vertices of $P$ are $v_1, v_2, \dots, v_k$ and
occur in that order on the path, we will often simplify the notation and refer to the path decomposition as $(W_1, W_2, \dots, W_k)$ where $W_{v_i} =: W_i$ for $1 \le i \le k$.

For any further notions not covered here we refer to~\cite{DiestelBook05}.

The paper is structured as follows. We begin in Section~\ref{sec:smalltw} by proving our theorem for graphs of small tree-width. For graphs of large tree-width we shall use a structure theorem or Robertson
and Seymour, although we will follow the notation and statement of~\cite{structure_thm}; this is explained in Section~\ref{sec:structure}. At the end of Section~\ref{sec:structure} we give a more detailed overview of how the proof then proceeds until the end of Section~\ref{lastsection}. In Section~\ref{sec:tight} we give our construction showing that the connectivity bound in Theorem~\ref{thm:main} is tight.

\section{Proof of Theorem \ref{thm:main}}\label{sec:smalltw}

The proof of Theorem \ref{thm:main} proceeds by considering separately the cases of when
the tree-width of the graph is large or small. In this, we follow much of the recent work analyzing the existence of clique minors in large graphs.  See \cite{BKMM, JORG1, JORG2, ft}.  The bounded tree-width case is easy:

\begin{Theorem}\label{thm:rsep}
For every $w\in\N$ there is a function $f_w\colon \N^2\to\N$ such that, for all $p,k\in\N$, every graph $G$ of tree-width~$< w$ either contains $k$ disjoint instances of a $K_p$ minor or has a set $X$ of at most $f_w(p,k)$ vertices such that $G-X$ has no $K_p$-minor.
\end{Theorem}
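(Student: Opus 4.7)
The plan is to prove Theorem~\ref{thm:rsep} by induction on $k$, aiming for the bound $f_w(p,k) := (k-1)w$, which in particular does not depend on~$p$. The base case $k=1$ is immediate: either $G$ contains a $K_p$-minor (and we have one disjoint instance) or $G$ has no $K_p$-minor at all (and $X=\emptyset$ works).

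For $k\ge 2$, fix a tree-decomposition $(T,\W)$ of $G$ of width $<w$ and root~$T$ arbitrarily. For $t\in V(T)$, let $G_t$ be the subgraph of $G$ induced by the union of all bags in the subtree of $T$ rooted at~$t$. If $G$ has no $K_p$-minor, take $X=\emptyset$. Otherwise choose $t$ to be a deepest vertex of $T$ for which $G_t$ contains a $K_p$-minor, and let $H$ be such a minor. Apply the induction hypothesis with parameter $k-1$ to $G-V(H)$, which still has tree-width $<w$ (one may simply delete $V(H)$ from every bag): if it contains $k-1$ disjoint $K_p$-minors we combine them with $H$ to finish; otherwise there is $X'\subseteq V(G)\setminus V(H)$ with $|X'|\le f_w(p,k-1)=(k-2)w$ meeting every $K_p$-minor of $G-V(H)$. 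Set $X:=X'\cup W_t$, so that $|X|\le (k-2)w+w=(k-1)w$.

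To verify that $X$ meets every $K_p$-minor of~$G$, suppose for contradiction that $H''$ is a $K_p$-minor in $G-X$. Since $K_p$ is connected, $H''$ is connected; and since $H''$ avoids~$W_t$, the set of bags meeting $V(H'')$ forms a connected subtree of $T-t$, hence lies in a single component of~$T-t$. If that component is the subtree rooted at some child $t'$ of~$t$, then $H''\subseteq G_{t'}$, contradicting the choice of $t$ as a deepest node whose $G_t$ admits a $K_p$-minor. Otherwise $V(H'')\cap V(G_t)=\emptyset$, so in particular $H''$ is disjoint from~$V(H)$; since $H''$ also avoids~$X'$, this contradicts the defining property of~$X'$.

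The only delicate point is this final verification, which rests on the standard separator property of tree-decompositions (that $W_t$ separates $V(G_t)\setminus W_t$ from $V(G)\setminus V(G_t)$) combined with the depth-minimality of~$t$; everything else is bookkeeping and a straightforward induction. It is worth noting that the bound $(k-1)w$ is independent of~$p$: bounded tree-width lets us \emph{localise} each $K_p$-minor and chop it off using only the $\le w$ vertices of a single bag, no matter how large $K_p$ is.
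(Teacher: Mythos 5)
Your proof is correct, and it takes a genuinely different route from the paper's. The paper directs each edge $t_1t_2$ of the decomposition tree towards whichever side(s) contain a $K_p$-minor, and then splits into two cases: either there is a node $t$ with no outgoing direction, in which case the single bag $V_t$ meets every $K_p$-minor; or some edge receives both directions, in which case it recurses on both sides and pays $2f_w(p,k-1)+w$, giving the exponential bound $f_w(p,k)=(2^{k-1}-1)w$. You instead root $T$, choose a \emph{deepest} node $t$ for which the rooted subtree graph $G_t$ contains a $K_p$-minor, extract one such minor $H\subseteq G_t$, and recurse on $G-V(H)$. Since no child $t'$ of $t$ has $G_{t'}$ containing a $K_p$-minor (by depth-maximality) and $W_t$ separates $G_t\setminus W_t$ from the rest of $G$, any $K_p$-minor avoiding $W_t\cup X'$ must be confined either to a child subtree (impossible by the choice of $t$) or to the graph outside $G_t$, hence inside $G-V(H)$ (impossible by the choice of $X'$). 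This yields the linear bound $(k-1)w$, which is both simpler to state and quantitatively sharper than the paper's. Both arguments rest on the same separator property of tree decompositions; yours localises and surgically removes one minor at a time, whereas the paper's bisects the tree and pays for both halves. One small stylistic remark: it would be worth saying explicitly that the root $r$ satisfies $G_r=G\Minor K_p$, so a deepest such $t$ exists, and that $|W_t|\le w$ because width $<w$ means bags have at most $w$ vertices.
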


\begin{proof}
For fixed $w$ and~$p$ we define $f_w (p,k)$ recursively for $k=1,2,\dots$. Clearly, $f_w (p,1) := 0$ satisfies the theorem for $k=1$. Given $k\ge 2$, let
 $$f_w(p,k) := 2 f_w(p,k-1) + w\,.$$
 To see that this satisfies the theorem, let $G$ be given, with a \td\ $(T,(V_t)_{t\in T})$ of width $<w$. Direct the edges $t_1 t_2$ of $T$ as follows.
Let $T_1,T_2$ be the components of $T-t_1 t_2$ containing $t_1$ and $t_2$,
respectively, and put
 $$G_1 := G[\bigcup_{t\in T_1} (V_t\setminus V_{t_2})]\quad {\rm and}\quad
   G_2 := G[\bigcup_{t\in T_2} (V_t\setminus V_{t_1})]\,.$$
 Direct the edge $t_1 t_2$ towards $G_i$ if $G_i$ has a $K_p$-minor,
thereby giving $t_1 t_2$ either one or both or neither direction.

If every edge of $T$ receives at most one direction, we follow
these to a node $t\in T$ such that no edge at $t$ in $T$ is directed away
from~$t$. As $K_p$ is connected, this implies that $V_t$
meets every instance of a $K_p$ minor in~$G$ \cite[Lemma 12.3.1.]{DiestelBook05}. This completes the proof with $X = V_t$, since
$|V_t| \le w\le f_w(p,k)$ by the choice of our \td.

Suppose now that $T$ has an edge $t_1 t_2$ that received both directions.
For each $i=1,2$ let us ask if $G_i$ has a set $X_i$ of at most $f_w(p,k-1)$ vertices such that $G_i - X_i$ has no $K_p$-minor. If this is the case for both~$i$, then as earlier there is no $K_p$-minor in $G-X$ for $X:= X_1\cup X_2\cup (V_{t_1}\cap V_{t_2})$.

Suppose then that~$G_1$, say, has no such set $X_1$ of vertices. By the induction
hypothesis, $G_1$~contains $(k-1)$ disjoint instances of a~$K_p$-minor. Since $t_1 t_2$ was
also directed towards~$t_2$, there is another such instance in~$G_2$. This
gives the desired total of $k$ disjoint instances of a $K_p$-minor in~$G$.
\end{proof}

The bulk of the work in proving Theorem \ref{thm:main} will be the case of large tree-width:

\begin{Theorem}\label{thm:bigtw}
For all $p,k\in\N$ there exists $w = w(p,k)\in\N$ such that every $(k(p-3) + 14p+14)$-connected graph of tree-width at least $w$ contains $k$ disjoint instances of a~$K_p$ minor.
\end{Theorem}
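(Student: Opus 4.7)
The plan is to invoke the grid minor theorem together with the Robertson--Seymour structure theorem in the form presented in Section~\ref{sec:structure} (following~\cite{structure_thm}) to reduce the task to an extremal argument inside a single large, structured substructure of~$G$. Choosing $w(p,k)$ sufficiently large, the grid theorem produces a very large wall~$W$ in~$G$, and by the structure theorem we may assume that $W$ sits inside a rich almost-embeddable piece of the structural tree-decomposition, surrounded by a controlled number of apex vertices and vortex-like attachments. The size of~$W$ is a free parameter that we may take as any chosen function of~$p$ and~$k$.

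I would first use the $(14p+14)$-connectivity part of the hypothesis to produce a single $K_p$-minor from $W$ together with a bounded set of external apex-like vertices linked into~$W$, via a Menger-type argument --- the by-now standard wall-plus-apex construction, whose constants can be tuned so that the resulting $K_p$-minor uses only a small controlled fraction of~$W$. This handles the base case $k=1$ and fixes the additive constant $14p+14$ in the connectivity hypothesis.

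The main work is then an inductive extraction. Suppose for contradiction that a maximal family $H_1,\dots,H_{i-1}$ of pairwise disjoint $K_p$-minors has been found with $i-1<k$, and that no further disjoint minor exists in $G-\bigcup_{j<i} V(H_j)$. The point of the bound $k(p-3)+14p+14$ is that each previously extracted $H_j$ costs only $p-3$ vertices in the linkage capacity needed to construct a further minor from the unused part of~$W$ --- a structural feature matched by the tight construction of Section~\ref{sec:tight}, reflecting that three of the $p$ branch vertices of any wall-based $K_p$-minor can be rerouted through unused bricks of the (enormously large) wall. Hence, after $i-1<k$ extractions, at least $(k-i+1)(p-3)+14p+14\ge 14p+14$ linkages remain available inside the residual wall, and the base construction reapplies to produce an $H_i$ disjoint from the previous minors, contradicting maximality.

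The hard part will be the per-minor accounting in this inductive step: choosing the wall~$W$, the extracted minors~$H_j$, and the linkages building them in a coordinated way so that exactly $p-3$ vertices of each~$H_j$ genuinely obstruct further extraction, while the remaining branch vertices can really be absorbed back into unused bricks of~$W$ without interfering with each other or with the surrounding almost-embeddable structure. Making this rerouting compatible uniformly across all $i\le k$ extractions and across arbitrary~$p$ is the technical heart of the proof, and dictates both the precise form $k(p-3)+14p+14$ of the connectivity bound and the choice of the tree-width threshold~$w(p,k)$.
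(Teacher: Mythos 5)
Your high-level framing --- apply the structure theorem to a graph of large tree-width, get a large wall plus apices and vortices, and use the connectivity to tie these together into many $K_p$-minors --- does match the paper's starting point (Lemma~\ref{lem:cleangrid} and Hypothesis H$(G,r)$). But the core of your argument, an inductive/maximality-based extraction of one $K_p$-minor at a time with a ``budget'' of $p-3$ vertices per extracted copy, is not what the paper does, and as stated it has a serious gap.

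The gap: a $K_p$-minor model $H_j$ is a subgraph, and there is no a priori bound on $|V(H_j)|$. After deleting $\bigcup_{j<i}V(H_j)$ you have no control over the connectivity or the structure of the residual graph, so you cannot simply reapply the base construction. Your fix --- choose the $H_j$ so that ``exactly $p-3$ vertices of each $H_j$ genuinely obstruct further extraction'' and reroute the rest into unused bricks of $W$ --- is precisely the content that needs to be proved, and you explicitly defer it as ``the technical heart.'' As it stands this is not a proof sketch but a restatement of the difficulty; in particular you never identify where the $p-3$ dense ``apex-like'' vertices for a $K_p$-minor come from (in the paper they come from the apex set $A$, from contracted components of $G_0 - G_0'$, or from the linkage inside a vortex), nor do you explain why they can be shared or rerouted between successive extractions.

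The paper avoids this entirely: it never extracts $K_p$-minors one at a time. Instead it builds all $k$ copies \emph{simultaneously}, either by finding a fan minor $F(kp,k(p-3))$ (which contains $kK_p$ directly, via the explicit branch sets given in Section~\ref{sec:smalltw}), or, when the linkage from the wall lands inside a vortex, by taking $p$ long paths $Q^1,\dots,Q^p$ of the vortex linkage, dividing them into $k$ blocks separated by buffer segments, and producing a $K_p$ inside each block using the rerouting claim~\eqref{reroute}. The constant $14p+14$ is then tied to a concrete density computation~\eqref{density} (together with Mader's theorem and the Thomas--Wollan linkage bound) that produces a $(p+1)$-linked subgraph inside each relevant part of the vortex decomposition --- accounting that is absent from your proposal. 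To make your extraction scheme rigorous you would effectively have to re-derive this machinery anyway, so I would recommend abandoning the one-at-a-time approach in favour of building $kK_p$ as a single minor, as the paper does.
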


\noindent{\bf Proof of Theorem \ref{thm:main}, assuming Theorems \ref{thm:rsep} and~\ref{thm:bigtw}.}
   Given $p,k\in\N$ define $f(p,k) := f_w (p,k)$, where $w = w(p,k)$ is provided by Theorem \ref{thm:bigtw} and $f_w$ by Theorem~\ref{thm:rsep}. Let $G$ be a $(k(p-3) +14p+14)$-connected graph. If $G$ has tree-width~$<w$, the assertion which Theorem~\ref{thm:main} makes about $G$ is tantamount to that of Theorem~\ref{thm:rsep}. If $G$ has tree-width at least~$w$, the assertion follows from Theorem \ref{thm:bigtw}.
   \hfill\qed

\bigbreak

Given Theorem \ref{thm:bigtw}, one might ask if a stronger statement might be true:  whether there exists a constant $c$ such that every sufficiently large $(k(p-3) + cp)$-connected graph contains $k$ disjoint instances of a $K_p$ minor.  However, the bound on the connectivity is not sufficient for such a strengthening.  Consider the complete bipartite graph $K_{k(p-2) + cp, T}$ for large values of $T$ and some fixed constant $c$.  Such a graph cannot contain $k$ disjoint instances of $K_p$ as a minor (assuming $k$ is chosen to be at least $cp+1$).  However, the graph has tree-width $k(p-2)+cp$, i.e. the tree-width is bounded with respect to $k$ and $p$.
\COMMENT{}

The proof of Theorem~\ref{thm:bigtw} will occupy us until the end of Section~\ref{lastsection}. Let $p,k\in\N$ be given, {\em and fixed until the end of the proof of Theorem~\ref{thm:bigtw}.} Several parameters defined in the course of the proof will depend implicitly  on this choice of $p$ and~$k$.

Given positive integers $\ell$ and~$n$, let us define the \emph{$\ell$-ladder} $L(\ell)$ and the \emph{fan} $F(\ell,n)$ as follows. Let $P = u_1\dots u_\ell$ and $Q = v_1\dots v_\ell$ be disjoint paths, and let $L(\ell)$ be obtained from their union by adding all the edges~$u_i v_i$. To obtain $F(\ell,n)$ from~$L(\ell)$, add $n$ independent vertices $w_1,\dots,w_n$, and join each of these to all the vertices of~$Q$.

It is easy to see that $F(p, p-3)$ has a $K_p$ minor: with $p-3$ two-vertex branch sets of the form $\{v_i,w_i\}$, and three further branch sets $\{v_{p-2}\}$, $\{v_{p-1}\}$, and $\{v_p, u_p, u_{p-1}, u_{p-2}\}$. Consequently, $F(kp, k(p-3))$ contains $k$ disjoint instances of a $K_p$ minor. It will thus suffice for our proof of Theorem~\ref{thm:bigtw} to find a $F(kp, k(p-3))$-minor in the graph under consideration.

%%%%%%%%%%%%%%%%%%%%%%%%%%%%%%%%%%%%%%%%%%%%%%

\section{The excluded minor theorem}\label{sec:structure}

In this section, we present a structure theorem for graphs with no large clique minor of
Robertson and Seymour \cite{RS17}.  We follow the notation and exact statement in \cite{structure_thm}.

A \textit{vortex} is a pair $V = (G,\Omega)$, where $G$ is a graph and $\Omega =: \Omega(V)$ is a linearly ordered set $(w_1, \ldots, w_n)$ of vertices in~$G$. These vertices are the \textit{society vertices} of the vortex; the number $n$ is its \textit{length}. We do not always distinguish notationally between a vortex and its underlying graph; for example, a \textit{subgraph of $V$} is just a subgraph of~$G$.  Also, we will often use $\Omega$ to refer both to the linear order of the vertices
$w_1, \dots, w_n$ as well as the set of vertices $\{w_1, \dots, w_n\}$.

A path--decomposition $\D = (X_1,\ldots,X_m)$ of $G$ is a \textit{decomposition of~$V$} if $m=n$ and $w_i\in X_i$ for all~$i$. The \textit{depth} of the vortex~$V$ is the minimum width of a path--decomposition of $G$ that is a decomposition of~$V$.

The \emph{adhesion} of our decomposition $\D$ is the maximum value of ${|X_{i-1}\cap X_{i}|}$, taken over all $1<i\leq n$. Write $Z_i:=(X_{i-1}\cap X_{i})\backslash \Omega$, for all $1 < i\leq n$.  Then, $\D$ is \textit{linked} if
\begin{itemize}
\item[$i.$] all these $Z_i$ have the same size;
\item[$ii.$] there are $|Z_i|$ disjoint $Z_i$--$Z_{i+1}$ paths in~$G[X_i] - \Omega$, for all $1<i<n$;
\item[$iii.$] $X_i \cap \Omega =\{w_i,w_{i+1}\}$ for all $i=1,\dots,n$, where $w_{n+1}:=w_n$.
\end{itemize}

Note that the union of those $Z_i$--$Z_{i+1}$ paths is a disjoint union of $X_1$--$X_n$ paths in~$G$; we call the set of these paths a \emph{linkage} of~$V$ with respect to $(X_1,\ldots,X_m)$.
We define the \textit{(linked) adhesion} of a vortex to be the minimum adhesion of a (linked) decomposition of that vortex; if it has no linked decomposition, its \textit{linked adhesion} is infinite.

For a positive integer $\alpha$, a graph $G$ is \textit{$\alpha$--nearly embeddable} in a surface $\Sigma$ if there is a subset $A\subseteq V(G)$ with $|A|\leq \alpha$ such that there are natural numbers  $\alpha'\leq \alpha$ and $n\ge \alpha'$ for which $G-A$ can be written as the union of $n+1$ graphs $G_0,\ldots,G_n$ such that the following holds:
\begin{enumerate}%[(i)]
\item[$i.$]  For all $1 \le i \le n$ and $\Omega_i:= V(G_i\cap G_0)$, the pair $(G_i,\Omega_i)=:V_i$ is a vortex, and for $1 \le i < j \le n$, $G_i\cap G_j \subseteq G_0$.
\item[$ii.$] The vortices $V_1,\ldots,V_{\alpha'}$ are disjoint and have adhesion at most $\alpha$; we denote this set of vortices by $\V$.
\item[$iii.$] The vortices $V_{\alpha'+1},\ldots,V_n$ have length at most 3; we denote this set of vortices by $\W$.
\item[$iv.$] There are closed discs in~$\Sigma$ with disjoint interiors $D_1,\ldots, D_n$ and an embedding ${\sigma: G_0 \hookrightarrow \Sigma-\bigcup_{i=1}^n D_i}$ such that $\sigma(G_0)\cap\boundary D_i = \sigma(\Omega_i)$ for all~$i$ and the generic linear ordering of $\Omega_i$ is compatible with the natural cyclic ordering of its image (i.e., coincides with the linear ordering of $\sigma(\Omega_i)$ induced by $[0,1)$ when $\boundary D_i$ is viewed as a suitable homeomorphic copy of $[0,1]/\{0,1\}$). For $i=1,\dots,n$ we think of the disc $D_i$ as \emph{accommodating} the (unembedded) vortex~$V_i$, and denote $D_i$ as~$D(V_i)$.\sloppy
\end{enumerate}
We call $(\sigma,G_0,A,\V,\W)$ an \emph{$\alpha$--near embedding} of $G$ in $\Sigma$.

Let $G'_0$ be the graph resulting from $G_0$ by joining any two unadjacent vertices $u,v\in G_0$ that lie in a common vortex $V\in\W$; the new edge $uv$ of $G'_0$ will be called a \emph{virtual edge}. By embedding these virtual edges disjointly in the discs $D(V)$ accommodating their vortex~$V$, we extend our embedding $\sigma\colon G_0\hookrightarrow\Sigma$ to an embedding $\sigma'\colon G'_0\hookrightarrow\Sigma$. We shall not normally distinguish $G'_0$ from its image in $\Sigma$ under~$\sigma'$.

The more widely known version of the excluded minor theorem of Robertson and Seymour (\cite{RS16}, see also \cite{DiestelBook05}) decomposes a graph not containing a fixed $H$ as a minor into a tree-like
structure of $\alpha$-nearly embeddable graphs, where the value of $\alpha$ depends solely on the
graph $H$.  We will need a variation of the structure theorem which ensures both that the
vortices are linked and that there is a large grid-like graph embedded in the surface when the graph is
assumed to have large tree width.

A vortex $(G_i,\Omega_i)$ is \emph{properly attached} to $G_0$ if, for every pair of distinct vertices $x,y\in \Omega_i$, there is a path $P_{xy}$ in $G_i$ with endvertices $x$ and $y$ and all inner vertices in $G_i-\Omega_i$ and further, for every choice of three distinct vertices $x,y,z\in\Omega_i$, the paths $P_{xy}$ and $P_{yz}$ can be chosen internally disjoint.

The \textit{distance} of two points $x,y\in \Sigma$ is the minimal value of $|G\cap C|$ taken over all curves $C$ in the surface that link $x$ and $y$ and hit the graph in vertices only.
The \textit{distance} of two vortices $V$ and $W$  is the minimal distance of a point $v\in D(V)$ and a point $v'\in D(W)$.

When a graph is embedded in a surface, a topological component of the surface minus the
graph that is homeomorphic to a disc is a \emph{face}.  The \emph{outer cycle} of a 2-connected plane graph is the cycle bounding its infinite face.  A cycle $C$ is \emph{flat} if $C$ bounds a disc $D\subseteq \Sigma$. Let $C_1,\ldots,C_n$ be flat cycles that bound discs $D_1,\ldots,D_n$, respectively. The cycles $(C_1,\ldots,C_n)$ are \emph{concentric} if $D_{i}\supseteq D_{i+1}$ for all $1\leq i < n$.

For positive integers~$r$, define a graph $H_r$ as follows.  Let $P_1, \dots, P_r$
be $r$ vertex disjoint (`horizontal') paths of length $r-1$, say $P_i = v_1^i\dots
v_r^i$. Let $V(H_r) = \bigcup_{i=1}^r V(P_i)$, and let
\begin{equation*}
\begin{split}
E(H_r) = \bigcup_{i=1}^r E(P_i) \cup
\Big\{&v_j^i v_j^{i+1} \mid \text{ $i,j$ odd};\ 1 \le i < r;\ 1 \le j \le r\Big\} \\
& \cup \Big\{v_j^i v_j^{i+1} \mid \text{ $i,j$ even};\ 1 \le i < r;\ 1 \le j \le r\Big\}.
\end{split}
\end{equation*}
The 6-cycles in $H_r$ are its \emph{bricks}. In the natural plane embedding of~$H_r$, these bound its `finite' faces. The outer cycle of the unique maximal 2-connected subgraph is called the \emph{boundary cycle}  of $H_r$.

Any subdivision $H = T H_r$ of $H_r$ will be called an \emph{$r$--wall}. The \emph{bricks} and the \emph{boundary cycle} of $H$ are its subgraphs that form subdivisions of the bricks and the boundary cycle of~$H_r$, respectively.    The \emph{first $n$ boundary cycles} $C_1,\dots,C_n$ of~$H_r$ are defined inductively: $C_n$~is the outer cycle (in the induced embedding) of the unique maximal 2-connected subgraph $H_r^{-(n-1)}$ of $H_r - (C_1\cup\ldots\cup C_{n-1})$.  An embedding of $H$ in a surface~$\Sigma$ is a \emph{flat} embedding, and $H$ is \emph{flat} in~$\Sigma$, if the boundary cycle $C$ of $H$ bounds a disc that contains a vertex of degree 3 of $H-C$.  We refer to the disc bounded by $C$
as $\Delta(\Sigma, H)$.

An $\alpha$--near embedding of a graph $G$ in some surface $\Sigma$ is {\it $\beta$--rich} if the following statements hold:
\begin{enumerate}%[(i)]
\item[$i.$]  $G_0'$ contains a flat $r$--wall $H$ for some $r \ge \beta$.
\item[$ii.$] For every vortex $V\in \V$ there are $\beta$ disjoint, concentric cycles $(C_1,\ldots,C_{\beta})$ in $G'_0$ that bound discs $(D_1,\ldots,D_\beta)$, respectively, the innermost disc $D_{\beta}$ contains $\Omega(V)$ and $H$ does not intersect with $D_1$.
\item[$iii.$] Every two vortices in $\V$ have distance at least $\beta$.
\item[$iv.$]  Let $V \in \V$ with $\Omega(V)=(w_1,\ldots,w_n)$. Then there is a linked decomposition of $V$ of adhesion at most $\alpha$ and a path $P$ in $V\cup \bigcup \W$ with $V(P\cap G_0)=\Omega(V)$, avoiding all the paths of the linkage of $V$, and traversing  $w_1,\ldots,w_n$ in their order.
\item[$v.$] For every vortex $V\in\V$ the society vertices $\Omega(V)$ are linked in $G'_0$ to the vertices of $H$ of degree 3 by a path system of $\beta$ disjoint paths and these paths have no inner vertices in $H$.
\item[$vi.$] All vortices in $\W$ are properly attached to $G_0$.
\end{enumerate}

\begin{Theorem}\label{richthm}
For every graph~$R$, there is an integer~$\alpha$ such that for every integer~$\beta$ there is an integer $w=w(R,\beta)$ such that the following holds.
Every graph~$G$ with~$tw(G)\geq w$ that does not contain $R$ as a minor has an $\alpha$--near, $\beta$--rich embedding in some surface~$\Sigma$ in which $R$ cannot be embedded.
\end{Theorem}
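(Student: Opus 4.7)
The plan is to derive Theorem \ref{richthm} by bootstrapping from the standard excluded minor structure theorem of Robertson and Seymour and then iteratively ``cleaning up'' the resulting $\alpha_0$-near embedding until all six rich conditions hold simultaneously. The value of $\alpha$ will be obtained from $\alpha_0 = \alpha_0(R)$ by absorbing a bounded number of extra apex vertices during the cleanup, while $w(R,\beta)$ will come from the excluded grid theorem applied with a parameter chosen large enough (depending on $\alpha,\beta$, and $R$) to survive all the pigeonhole losses in the subsequent steps.

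First I would invoke the plain structure theorem to obtain an $\alpha_0$-near embedding $(\sigma,G_0,A,\V,\W)$ of $G$ in a surface $\Sigma$ in which $R$ does not embed, and use the excluded grid theorem of Robertson-Seymour to extract, from the assumption $\mathrm{tw}(G) \ge w$, a very large wall in $G$. A pigeonhole over the bounded-size torso pieces then locates a wall of size roughly $\beta' \gg \beta$ inside $G_0 \cup \bigcup \mathcal{W}$, and by contracting the small $\W$-vortices into their embedded virtual edges we may regard this wall as living in $G_0'$. After deleting a bounded number of rows/columns and absorbing into $A$ the finitely many vertices used by crossings or by ``shortcuts'' through the disc of a vortex in $\V$, one obtains a flat $r$-wall $H$ in $G_0'$ with $r \ge \beta$, giving condition (i). Condition (v) is then obtained by applying a Menger-type linkage argument between $\Omega(V)$ and the degree-3 vertices of $H$ in $G_0'$: if no such $\beta$-linkage exists, a small cut separates them and can be absorbed into $A$, contradicting the local connectivity the construction has forced.

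Next I would handle each large vortex $V\in\V$. Condition (iii), the pairwise distance requirement, is achieved by a further pigeonhole: if too many vortices cluster, remove all but a representative subset; the removed vortices contribute only boundedly many society vertices to $A$. For condition (ii), the concentric cycles around the disc $D(V)$ come from the surface topology together with the fact that we have thrown away enough of the wall near $D(V)$ to leave $\beta$ nested cycles in $G_0'$ bounding the vortex; the standard argument here is to use the embedded portion of $G_0'$ and its dual, pulling out cycles that separate $D(V)$ from the wall $H$. Condition (iv), the existence of a linked decomposition of adhesion at most $\alpha$ together with a path through $V\cup\bigcup\W$ visiting $\Omega(V)$ in order, follows from the Robertson-Seymour linkage argument for vortices: any path decomposition of minimum adhesion that is not linked can be rerouted by swapping paths across two adjacent bags, and iterating this produces a linked one of the same adhesion. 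The traversing path is extracted from the proper-attachment structure after absorbing a bounded number of exceptional society vertices into $A$, and condition (vi) is forced by absorbing into $A$ the finitely many society vertices of $\W$-vortices that fail the internal disjoint-path property.

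The main obstacle will be the interaction between these conditions: each cleanup step can damage the properties established earlier, so one must either perform the steps in a carefully chosen order or set up an invariant (``rich at level $j$'') and prove convergence. In particular, rerouting for the linked decomposition in (iv) can alter which edges of $G_0'$ are virtual and hence shift the boundary of the embedded part, potentially disturbing the concentric cycles of (ii) and the linkage of (v). The cleanest way to control this, and the approach I would follow, is to fix the combinatorial target parameter $\beta$ at the outset, start from a wall and vortex setup at scale $\beta^{C}$ for a sufficiently large constant $C = C(R)$, and show that each cleanup step costs only a constant-factor reduction in the relevant scale, so that after finitely many steps all six conditions hold with the promised parameter $\beta$.
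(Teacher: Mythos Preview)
The paper does not prove Theorem~\ref{richthm}; it is quoted without proof from the companion paper~\cite{structure_thm} (Diestel, Kawarabayashi, M\"uller, Wollan) and used as a black box throughout. There is therefore nothing in the present paper against which to compare your proposal.

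As a standalone sketch your outline is in the right spirit---one does start from the plain Robertson--Seymour structure theorem and iteratively clean up---but it contains at least one genuine gap. For condition~(iii) you propose to deal with clustered large vortices by ``removing all but a representative subset'' and absorbing the deleted vortices' society vertices into~$A$. This does not work: vortices in~$\V$ have unbounded length, so their society sets~$\Omega(V)$ can be arbitrarily large and cannot be absorbed into a bounded apex set. The actual mechanism (in~\cite{structure_thm}) is to \emph{merge} nearby large vortices into a single one---there are only~$\alpha_0$ of them, so this terminates and the merged vortex still has bounded adhesion---not to delete them. Several of your other steps are also more delicate than the sketch suggests (for instance, obtaining the traversing path in~(iv) and the linkage in~(v) simultaneously with the concentric cycles of~(ii) without one construction undoing another), but the vortex-deletion move is the step that is simply wrong as stated.
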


Here is an outline of how we shall use Theorem~\ref{richthm} in our proof of Theorem~\ref{thm:bigtw}. By Euler's formula, a graph embedded in a fixed surface has average degree at most $6+o(1)$ (in terms of its order).%
   \COMMENT{}
   The high connectivity we assumed for our graph~$G$ thus implies that, when we apply Theorem~\ref{richthm} to it, $G$ cannot be entirely embedded in~$\Sigma$: when the wall $H\subseteq G'_0$ gets large, the embedded subgraph $G'_0$ of $G$ must have many vertices of degree at most~$6$. These vertices send their remaining edges outside~$G'_0$: to the apex set~$A$, to components of $G_0 - G'_0$, or into the vortices~$G_1,\dots,G_{\alpha'}$.

Distinguishing vertices of large and small degree in~$G'_0$ will be crucial to our proof. However, we put the threshold a little higher than~6, at~$10p$. We shall first show, in Section~\ref{sec:subwall}, that by carefully choosing a subwall $H'$ of~$H$, we can ensure that the vertices of $G'_0$ in $\Delta(\Sigma,H')$ have large degree in~$G'_0$, and have no neighbours outside~$G'_0$ other than in~$A$. In Sections \ref{sec:linkages} and~\ref{sec:link} we then find a large linkage in $G'_0$ from a cycle deep inside $H'$ to vertices that have small degree in~$G'_0$. These vertices send many edges out of~$G'_0$. If these edges go directly to~$A$ or to components of $G_0 - G'_0$ (which  in turn sends many edges to~$A$, by the connectivity of~$G$), we can build from this linkage, some cycles in~$H'$ through which it passes, and many common neighbours in $A$ of the endvertices of our linkage or of those components, an instance of an $F(kp, k(p-3))$-minor which contains our desired $kK_p$-minor. Otherwise, most of the endvertices of our linkage send their many edges out of $G'_0$ into vortices, and many into the same vortex. We shall then find our $kK_p$ minor using that vortex (Section~\ref{lastsection}).

\section{Isolating a subwall in a disc with all degrees large}\label{sec:subwall}

Our aim in this section is to show that when we apply Theorem~\ref{richthm} to our highly connected graph~$G$, we can choose a subwall $H'$ of the wall $H$ so that the vertices of $G'_0$ in $\Delta(\Sigma,H)$ have large degree in~$G'_0$, and have no neighbours outside~$G'_0$ other than in~$A$.

\begin{Lemma}\label{lem:cleangrid}
Let $\alpha\in\N$ be as provided by Theorem~\ref{richthm} for $R = kK_p$. For every $r\in\N$ there exists $w\in\N$ such that every $(k(p-3)+14p+14)$-connected graph $G\not\Minor kK_p$ of tree-width at least $w$ admits an $\alpha$-near $\beta$-rich embedding for some $\beta \ge r$ such that there exists an r-wall $H'$ contained in $G_0'\cap \Delta(\Sigma, H)$ with the property that every vertex in $\Delta(\Sigma, H')$ has degree at least 10p in $G'_0$ and has no neighbour in $G-A$ outside $G'_0$.
\end{Lemma}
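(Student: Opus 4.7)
\noindent\emph{Proof sketch.}
The plan is to apply Theorem~\ref{richthm} with $R = kK_p$, obtaining the constant~$\alpha$, and with a parameter $\beta$ chosen large enough in terms of $r$, $\alpha$, $p$ and~$k$; taking $w := w(kK_p, \beta)$ yields an $\alpha$-near, $\beta$-rich embedding of $G$ in some surface~$\Sigma$, together with a flat $\beta$-wall $H \subseteq G_0'$ bounding a disc $\Delta(\Sigma, H)$. The heart of the argument is then to extract an $r$-subwall $H' \subseteq H$ whose disc $\Delta(\Sigma, H')$ avoids every vortex disc $D(V_i)$; once this is done, the two conditions of the lemma follow almost automatically.

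First I would localize the ``bad'' vertices. Call $v \in V(G_0') \cap \Delta(\Sigma, H)$ \emph{bad} if $\deg_{G_0'}(v) < 10p$ or if $v$ has a neighbour in $V(G-A) \setminus V(G_0')$. By the structure of the near-embedding, any edge of $G-A$ from a vertex $u \in V(G_0)$ to a vertex of $V(G_i) \setminus V(G_0)$ must lie in $E(G_i)$, which forces $u \in V(G_i) \cap V(G_0) = \Omega(V_i)$; hence any violator of the neighbour condition lies in some society. Now suppose instead $\deg_{G_0'}(v) < 10p$. The connectivity hypothesis gives $v$ at least $k(p-3) + 4p + 14$ incident edges not in $E(G_0')$; at most $|A| \le \alpha$ of them can reach~$A$, and any remaining one either reaches a vortex interior directly, or joins $v$ to another $V(G_0)$-vertex through some $E(G_i) \setminus E(G_0)$ --- in either case forcing $v \in \Omega(V_i)$ for some~$i$. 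Thus every bad vertex is a society vertex of a vortex whose disc lies inside $\Delta(\Sigma, H)$.

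Next I would build $H'$ in two stages on the grid-like structure of~$H$. The $\V$-vortices are few in number ($|\V| \le \alpha$), pairwise $\beta$-distant by condition~(iii), and buffered from $H$ by $\beta$ concentric cycles by condition~(ii); partitioning $H$ into roughly $(\beta/r)^2$ disjoint $r$-subwalls and discarding the at most $O(\alpha)$ whose discs meet a $\V$-vortex disc therefore leaves plenty of candidates. The $\W$-vortices are small (length~$\le 3$) but potentially numerous; for them I would use that their discs are pairwise disjoint in~$\Sigma$, each attached to at most three vertices of~$G_0$, together with an Euler-type density bound on $G_0$ embedded in $\Sigma$ minus the vortex discs, to limit how many $\W$-vortex discs can invade any one candidate subwall's disc. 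A pigeonhole over the surviving candidates, with $\beta$ chosen large enough in terms of $r$, $p$, $k$ and~$\alpha$, then yields an $r$-subwall $H'$ whose disc meets no vortex disc at all.

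Finally, verification is short. For $v \in V(G_0') \cap \Delta(\Sigma, H')$ we have $v \notin \Omega(V_i)$ for any~$i$, so $v$ has no neighbour in $V(G-A) \setminus V(G_0')$; and since $v$'s remaining non-$V(G_0')$ neighbours all lie in~$A$, we get $\deg_{G_0'}(v) \ge \deg_G(v) - |A| \ge k(p-3) + 14p + 14 - \alpha \ge 10p$. The main obstacle throughout is the $\W$-vortex analysis: unlike the $\V$-vortices they come with no buffer cycles or pairwise-distance guarantee, so the density argument bounding their occupation of a subwall's disc is the key technical step that fixes the required size of~$\beta$.
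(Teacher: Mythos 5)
Your approach is direct — localize the bad vertices inside vortex discs and then find a subwall whose disc avoids all vortex discs — whereas the paper's proof is by contradiction and never tries to avoid the $\W$-vortices at all. This difference is not a matter of taste: your route has a genuine gap at exactly the step you flag as the ``key technical step,'' and I do not think it can be filled.

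The problem is the $\W$-vortex density argument. The vortices in $\W$ have length at most~3, but there is no a priori bound on how many of them there are, and no pairwise-distance or buffer-cycle guarantee for them (conditions ii and iii of $\beta$-richness apply only to $\V$). In the worst case essentially every face of $G_0'$ inside $\Delta(\Sigma,H)$ could accommodate a $\W$-vortex with a non-trivial interior, so that every candidate $r$-subwall's disc meets a $\W$-vortex disc. An Euler bound on $G_0$ (or $G_0'$) tells you the number of faces is linear in $|V(G_0')|$, which is unbounded in terms of $r,p,k,\alpha$; it does not give you a single vortex-free subwall. Your secondary localization claim is also not airtight: if a vertex $v$ has $\deg_{G_0'}(v)<10p$ but all of its remaining $\ge k(p-3)+4p+14$ edges go to $A$ (possible whenever $\alpha \ge k(p-3)+4p+14$), then $v$ is bad without being a society vertex of any vortex, so bad vertices need not be confined to vortex discs at all.

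What the paper does instead is turn these obstacles into progress. It first minimizes $|G_0'|$ over all $\alpha$-near $\beta$-rich embeddings, which forces $G_0'\cap\Delta(\Sigma,H')$ to be connected for every subwall $H'$ (any spare component could be pushed into a $\W$-vortex, shrinking $G_0'$). It then observes that, by $(k(p-3)+14p+14)$-connectivity, every component of $(G-A)-G_0'$ is separated from $G_0'$ by at most three vertices and therefore has at least $k(p-3)$ neighbours in $A$; likewise a vertex of $G_0'$ of degree $<10p$ in $G_0'$ with no neighbour outside $G_0'$ in $G-A$ must itself have $\ge k(p-3)$ neighbours in $A$. Now choose $\beta$ large enough that $H$ contains an $\ell$-ladder minor whose branch sets are disjoint, well-separated $r$-walls $H_1,\dots,H_\ell$. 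If each $\Delta(\Sigma,H_i)$ contains a bad vertex $u_i$, then each $u_i$ (or the component it neighbours) carries a set $A_i\subseteq A$ of $k(p-3)$ common neighbours; pigeonholing on $\binom{|A|}{k(p-3)}$ gives $kp$ indices with the same $A_i=A'$, and together with the ladder this yields an $F(kp,k(p-3))$-minor, hence $kK_p\preccurlyeq G$, contradicting the hypothesis. So the bad vertices are not avoided; their ubiquity is precisely what produces the forbidden minor. This is the idea your proposal is missing.
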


\begin{proof}
Let $r$ be given. We will choose $\beta= \beta(r)$ below; it must be sufficiently large to guarantee the $\beta$ wall $H$ in an $\alpha$-near $\beta$-rich embedding contains enough disjoint $r$-walls so that if none of these can serve as $H'$ for our lemma, we can combine them all to find a $kK_p$ minor.  Given such a $\beta$, the existence of $w$ is then implied by Theorem \ref{richthm}.  Let $G$ be a $(k(p-3)+14p+14)$-connected graph with an $\alpha$-near $\beta$-rich embedding in~$\Sigma$. Choose the $\alpha$-near embedding so that $|G'_0|$ is minimum. This implies that for every subwall $H'$ of~$H$ the graph $G'_0\cap\Delta(\Sigma,H')$ is connected: any component other than that containing~$H'$ could be included in $V_i$ for some $V \in \mathcal{W}$, decreasing~$|G'_0|$.

Consider a component $C$ of $G_0 - G'_0$, and pick a vertex $v\in C$. Then $C$ is separated from $G'_0$ in $G-A$ by the at most 3 vertices in $G'_0$.%
   \COMMENT{}
   Since $G$ is $(k(p-3)+14p+14)$-connected, this means that $C$ has at least $k(p-3)$ distinct neighbours in~$A$. Let $G'$ be obtained from $G$ by contracting every component $C$ of $G_0 - G'_0$ to one vertex; for every vertex $v\in C$ we denote this new vertex contracted from $C$ as~$v'$.%
   \COMMENT{}

Call a vertex $u$ of $G'_0$ in $\Delta(\Sigma,H)$ \emph{bad} if it has degree $< 10p$ in $G'_0$ or has a neighbour in $(G-A)-G'_0$. If $u$ has a neighbour $v$ in~$(G-A)-G'_0$, then $v$ must lie in~$G_0-G'_0$; recall that, by definition a $\beta$-rich $\alpha$-near embedding, the disc $\Delta(\Sigma,H)$ contains no vertex from any vortex $V \in \mathcal{V}$. In~$G'$, the contracted vertex $v'$ has $k(p-3)$ neighbours in~$A$. Similarly if $u$ has degree $<10$ in $G'_0$ but no neighbour in $(G-A)- G'_0$, then $u$ itself has more than $k(p-3)$ neighbours in~$A$, by the connectivity assumed for~$G$.

By making $\beta$ large enough in terms of $r$ and $\ell$ (see below), we can find in $H$ an instance of an $L(\ell)$-minor (an $\ell$-ladder) in which every branch set induces a subgraph in $H$ containing an $r$-wall, and these $r$-walls $H_i$ are sufficiently spaced out in $\Delta(\Sigma,H)$ that the discs $\Delta(\Sigma,H_i)$ are disjoint and not joined by edges of~$G'_0$. In particular, for any vortex $V \in \mathcal{W}$, the corresponding vertices $\Omega(V)$ meet at most one of these~$\Delta(\Sigma,H_i)$. If one of these discs $\Delta(\Sigma,H_i)$ contains no bad vertex, our lemma is proved with $H':= H_i$. So assume that each of them contains a bad vertex. Let $H_1,\dots, H_\ell$ be the $r$-walls from the branch sets of the `top' row of our $\ell$-ladder minor, and put $\Delta_i = \Delta(\Sigma,H_i)$ for $i=1,\dots,\ell$. For each~$i$, pick a bad vertex $u_i\in\Delta_i$. If $u_i$ has a neighbour $v_i$ in $(G-A)-G'_0$, its neighbour $v'_i$ in $G'$ has (in~$G$) at least $k(p-3)$ neighbours in~$A$, and these $v'_i$ are distinct for different~$i$. Let $G''$ be obtained from $G'$ by contracting the edge $u_i v'_i$, and call the contracted vertex~$w_i$. If $u_i$ has no neighbour in $(G-A) - G'_0$, then $u_i$ itself has $k(p-3)$ neighbours in~$A$; let us rename these $u_i$ as~$w_i$.

For each $i=1,\dots,\ell$, the vertex $w_i$ has, in~$G'$, a set $A_i$ of $k(p-3)$ neighbours in~$A$. We now choose $\ell$ large enough that for $kp$ values of~$i$, say those in~$I$, the sets $A_i$ coincide. (Notice that $\ell$ depends only on $\alpha$, $k$ and~$p$, all of which are constant.) Let $A'$ denote this common set $A_i$ for all $i\in I$. Together with $A'$ and the vertices $v'_i$ with $i\in I$, our instance of an $L(\ell)$-minor in $H'$ contains an instance of an $F(kp, k(p-3))$-minor in~$G'$: the $k(p-3)$ vertices in $A'$ form singleton branch sets, their neighbouring branch sets are sets $V(G'_0)\cap\Delta_i$ for $i\in I$, plus $v'_i$ as appropriate (recall that these sets are connected by the minimality of~$|G'_0|$), and the remaining branch sets found in our ladder~$L(\ell)$. Thus, $kK_p\minor F(kp, k(p-3))\minor G'\minor G$, contradicting our choice of~$G$.
\end{proof}

For easier reference later, let us summarize as a formal hypothesis the properties ensured by Lemma~\ref{lem:cleangrid} along with the aspects of a $\beta$-rich embedding we will need as we go forward.  We will be able to ensure these properties as long as the graph we are interested has sufficiently large tree width.  Let $\Sigma$ and $\alpha\in\N$ be as provided by Theorem~\ref{richthm} for $R = kK_p$ applied to the graph $G$ be a graph.  Let $r>0$ an integer.

\paragraph{\boldmath Hypothesis H$(G,r)$:} The graph $G$ is $(k(p-3)+14p+14)$-connected graph and has no $kK_p$ minor. The graph $G$ has an $\alpha$-near embedding satisfying the following properties:
\begin{itemize}
\item[$i.$] There is a flat $r$-wall $H$ in $G'_0$.
\item[$ii.$] Every vertex $v \in G_0' \cap \Delta(\Sigma, H)$ has degree at least $10p$ in $G_0' \cap \Delta(\Sigma, H)$ and for every vortex $V \in \mathcal{W}$, the vertices $\Omega(V)$ are disjoint from $G_0' \cap \Delta(\Sigma, H)$.
\item[$iii.$] Let $V \in \V$ with $\Omega(V)=(w_1,\ldots,w_n)$. Then, there is a linked decomposition $(X_1,\ldots,X_n)$ of $V$ of adhesion at most $\alpha$ and there is a path $P$ in $V\cup \bigcup \W$ with $V(P\cap G_0)=\Omega(V)$, the path $P$ is disjoint to all paths of the linkage of $V$ and traverses $w_1,\ldots,w_n$ in their linear order.
\item[$iv.$]All vortices in $\W$ are properly attached to $G_0$.
\end{itemize}%

\bigskip\noindent
   Lemma~\ref{lem:cleangrid} says that, for every $r\in\N$, every $(k(p-3)+14p+14)$-connected graph $G\not\Minor kK_p$ of large enough tree-width satisfies Hypothesis~H($G,r$). Note that if $G$ satisfies H($G,r$) then it also satisfies H($G,r'$) for every $r'\le r$: just take an $r'$-wall $H'$ inside the given $r$-wall~$H$.

%%%%%%%%%%%%%%%%%%%%%%%%%%%%%%%%%%%%%%%%%%%%%%%%%%

\section{Optimizing linkages}\label{sec:linkages}

In this section we prove three lemmas about linkages, which may also be of use elsewhere.

An $X$--$Y$ linkage $\P$ in a graph $G$ is \emph{singular}  if $V(\bigcup\P) = V(G)$ and $G$ does not contain any other $X$--$Y$ linkage.%
   \COMMENT{}

\begin{Lemma}\label{lem:singlink}
If a graph $G$ contains a singular linkage~$\P$, then $G$ has path-width at most~$|\P|$.
\end{Lemma}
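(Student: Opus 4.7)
The plan is to construct a path decomposition of width at most $k := |\mathcal{P}|$ directly from the linkage. Write $P_i = v_0^i v_1^i \cdots v_{\ell_i}^i$ with $x_i = v_0^i$ and $y_i = v_{\ell_i}^i$; since $\mathcal{P}$ is singular, $V(G) = \bigcup_i V(P_i)$.

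First I would derive two structural consequences of singularity. \emph{No chords on any $P_i$:} an edge $v_a^i v_b^i$ with $a+1 < b$ would let us shortcut $P_i$ and produce a different $X$--$Y$ linkage. \emph{Monotonicity of cross-edges:} for any two edges $v_{a_1}^i v_{b_1}^{i'}$ and $v_{a_2}^i v_{b_2}^{i'}$ of $G$ with $i \ne i'$ and $a_1 < a_2$, we must have $b_1 \le b_2$; otherwise swapping the tails at these crossing edges (forming $x_i P_i v_{a_1}^i v_{b_1}^{i'} P_{i'} y_{i'}$ and $x_{i'} P_{i'} v_{b_2}^{i'} v_{a_2}^i P_i y_i$, with the other $P_j$ unchanged) yields a different linkage.

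Then I would build a linear order $\sigma$ on $V(G)$ by greedy interleaving. A ``state'' $(j_1,\ldots,j_k)$ records that the prefix $v_0^i,\ldots,v_{j_i}^i$ of each $P_i$ has already been placed; I advance $P_i$ (placing $v_{j_i+1}^i$ next) only when every cross-neighbour of the current tip $v_{j_i}^i$ has already been placed. This keeps the \emph{live set} of placed vertices that still have an unplaced neighbour contained in the $k$ current tips. The crux is to show that at any non-terminal state some path can in fact be advanced. If not, the ``blocking'' relation $i \to i'$ (witnessed by a cross-edge $v_{j_i}^i v_b^{i'}$ with $b > j_{i'}$) contains a directed cycle $i_1 \to \cdots \to i_m \to i_1$. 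Picking a witness cross-edge $e_r = v_{j_{i_r}}^{i_r} v_{b_r}^{i_{r+1}}$ for each step, the cyclic rerouting $Q_{i_r} := x_{i_r} P_{i_r} v_{j_{i_r}}^{i_r} e_r v_{b_r}^{i_{r+1}} P_{i_{r+1}} y_{i_{r+1}}$ (leaving $P_j$ untouched for $j$ outside the cycle) uses the strict inequalities $b_r > j_{i_{r+1}}$ to remain pairwise disjoint on each $P_{i_{r+1}}$, and hence forms a different $X$--$Y$ linkage, contradicting singularity.

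With the advance always available, $\sigma$ can be carried through to the terminal state $(\ell_1,\ldots,\ell_k)$, and its live set has size at most $k$ throughout. The standard correspondence between vertex orderings with bounded live set and path decompositions of bounded width then yields path-width at most $k$. The main obstacle I anticipate is the cyclic rerouting step: it extends the two-path swap behind monotonicity to arbitrary cycle length, and is the one place where the full strength of singularity (rather than just the absence of parallel alternatives between two paths) is needed.
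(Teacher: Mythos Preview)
Your proof is correct and takes essentially the same approach as the paper: the paper's inductive step (find an endpoint $x\in X$ whose only neighbour is its path-successor, delete it, recurse) is precisely your ``advance'' move, and the paper proves this step is always available by the identical cyclic-rerouting argument on a functional digraph on~$\mathcal{P}$. The only differences are cosmetic---the paper builds the path decomposition directly by induction rather than via a vertex ordering with bounded live set, and your monotonicity-of-cross-edges observation, while true, is not actually needed (the advance rule alone guarantees non-tip vertices are never live).
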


\begin{proof}
Let $\P$ be a singular $X$--$Y$ linkage in~$G$. Applying induction on~$|G|$, we show that $G$ has a path-decomposition $(X_0,\dots,X_n)$ of width at most~$|\P|$ such that $X\subseteq X_0$. Suppose first that every $x\in X$ has a neighbour $y(x)$ in~$G$ that is not its neighbour on the path $P(x)\in\P$ containing~$x$. Then $y(x)\notin P(x)$ by the uniqueness of~$\P$.%
   \COMMENT{}
   The digraph on $\P$ obtained by joining for every $x\in X$ the `vertex' $P(x)$ to the `vertex' $P(y(x))$ contains a directed cycle~$D$.%
   \COMMENT{}
   Let us replace in $\P$ for each $x\in X$ with $P(x)\in D$ the path $P(x)$ by the $X$--$Y$ path that starts in~$x$, jumps to $y(x)$, and then continues along~$P(y(x))$. Since every `vertex' of~$D$ has in- and outdegree both~1 there, this yields an $X$--$Y$ linkage with the same endpoints as $\P$ but different from~$\P$. This contradicts our assumption that $\P$ is singular. Thus, there exists an $x\in X$ without any neighbours in $G$ other than (possibly) its neighbour on~$P(x)$. Consider this~$x$.

If $P(x)$ is trivial, then $x$ is isolated in $G$ and $x\in X\cap Y$. By induction, $G-x$ has a path-decomposition $(X_1,\dots,X_n)$ of width at most $|\P|-1$ with $X\setminus\{x\}\subseteq X_1$.%
   \COMMENT{}
   Add $X_0:= X$ to obtain the desired path-decomposition of~$G$. If $P(x)$ is not trivial, let $x'$ be its second vertex, and replace $x$ in $X$ by $x'$ to obtain~$X'$. By induction, $G-x$ has a path-decomposition $(X_1,\dots,X_n)$ of width at most $|\P|$ with $X'\subseteq X_1$. Add $X_0:= X\cup\{x'\}$ to obtain the desired path-decomposition of~$G$.
\end{proof}

Our next lemma will help us re-route segments of an $X$--$Y$ linkage~$\P$ in $G$ through a subgraph $H\subseteq G$, which may or may not intersect~$\bigcup\P$. Let $\Q$ be a set of disjoint paths that start in~$H$, have no further vertices in~$H$,%
   \COMMENT{}
   and end in~$\bigcup\P$. (They may have earlier vertices on~$\P$.) The \emph{$(\Q,H)$-segment} of a path $P\in\P$ is the unique maximal subpath of $P$ that starts and ends in a vertex of~$\bigcup\Q\cup H$; this subpath may be trivial, or even empty. We call $\Q$ an \emph{$H$--$\,\P$ comb} if the set of endvertices of $(\Q,H)$-segments of paths in~$\P$ equals the set of final vertices of paths in~$\Q$.

\begin{Lemma}\label{lem:comb}
Let $t$ be an integer, let $\P$ be an $X$--$\,Y$ linkage in a graph~$G$, and let $H\subseteq G$. If $G$ contains $t$ disjoint $H$--$\,(X\cup Y)$ paths, then $G$ contains an $H$--$\,\P$ comb consisting of at least $t$ paths.
\end{Lemma}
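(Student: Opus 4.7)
The plan is to start from the given $t$ disjoint $H$--$(X\cup Y)$ paths $R_1,\dots,R_t$ and reshape them into an $H$--$\P$ comb by an extremal argument. First, I would truncate each $R_i$: letting $h_i$ be the last vertex of $R_i$ lying in $H$ (traversed from $R_i$'s $H$-endpoint), and letting $v_i$ be the first vertex of $R_i$ in $\bigcup\P$ occurring after~$h_i$, set $Q_i$ to be the $h_i$--$v_i$ subpath of $R_i$. The collection $\Q_0:=\{Q_1,\dots,Q_t\}$ then consists of $t$ disjoint paths, each of which starts in $H$, has no further vertex in $H$, ends in $\bigcup\P$, and has interior disjoint from both $H$ and $\bigcup\P$.

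Let $\mathcal{S}$ denote the family of all sets $\Q$ of disjoint paths that start in $H$, have no further vertex in $H$, and end in $\bigcup\P$, and that satisfy $|\Q|\ge t$. The family $\mathcal{S}$ is nonempty since $\Q_0\in\mathcal{S}$. I would choose $\Q^*\in\mathcal{S}$ extremally: first maximize $|\Q|$, and then subject to that, minimize $|V(\bigcup\Q)|$. The claim is that any such $\Q^*$ is an $H$--$\P$ comb, which would complete the proof.

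To establish the claim, suppose $\Q^*$ is not a comb and consider first the case that some endvertex $u$ of the $(\Q^*,H)$-segment of some $P\in\P$ is not a final vertex of any path in~$\Q^*$. Then $u\in\bigcup\Q^*\cup H$. If $u$ lies in the interior or at the $H$-endpoint of some $Q\in\Q^*$, then, since $u\in V(P)\subseteq V(\bigcup\P)$, replacing $Q$ by its subpath ending at $u$ yields an element of $\mathcal{S}$ with strictly fewer total vertices, contradicting minimality of $|V(\bigcup\Q^*)|$. If instead $u\in V(P)\cap H$ lies on no path of $\Q^*$, then appending the trivial (one-vertex) path $\{u\}$ to $\Q^*$ yields an element of $\mathcal{S}$ with strictly more paths, contradicting maximality of~$|\Q^*|$.

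The main obstacle is the reverse direction of the set equality in the comb definition: ruling out that some final vertex $v$ of a path $Q\in\Q^*$ is an \emph{interior} vertex of the $(\Q^*,H)$-segment of the path in $\P$ containing~$v$, that is, $v$ is bracketed on both sides on $P$ by other landings of $\Q^*$ or by $V(P)\cap H$. I would handle this by a rerouting surgery: exchange $Q$ with a subpath that follows $P$ from~$v$ to an extremal vertex of the segment, then reapply the extremal argument. The key input is that, thanks to the initial truncation being preserved by the minimality of $|V(\bigcup\Q^*)|$, the intersection $V(\bigcup\Q^*)\cap V(\bigcup\P)$ consists of exactly the final vertices of $\Q^*$, which makes such surgeries along paths of $\P$ well-defined; the extremal choice should then force the offending middle landings to disappear and hence force $\Q^*$ to be a comb.
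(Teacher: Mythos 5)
Your direction~(a) argument and the observation that, by vertex-minimality, $V\bigl(\bigcup\Q^*\bigr)\cap V\bigl(\bigcup\P\bigr)$ consists exactly of the final vertices of $\Q^*$ are both correct. The gap is in direction~(b), and it is not one that your extremal set-up can close. Notice that a comb can have at most two final vertices landing on any one $P\in\P$ (the two endvertices of its $(\Q,H)$-segment), so a comb has at most $2|\P|$ paths. Your first optimisation step, maximising $|\Q|$ over all families of disjoint $H$--$\bigcup\P$ paths, is therefore working \emph{against} the comb condition: whenever $H$ meets $\bigcup\P$ in more than $2|\P|$ vertices, that maximisation forces you to include a trivial one-vertex path at every vertex of $H\cap V\bigl(\bigcup\P\bigr)$, giving far more than two final vertices on some $P$. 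Concretely, take $\P=\{P\}$ with $P=x\,a\,b\,c\,y$, $X=\{x\}$, $Y=\{y\}$, and $H$ the path $a\,b\,c$. Here $t=2$ (via $a\,x$ and $c\,y$), but your extremal $\Q^*$ is $\{\{a\},\{b\},\{c\}\}$: its $(\Q^*,H)$-segment of $P$ is $a\,b\,c$ with endvertices $\{a,c\}$, while the set of final vertices is $\{a,b,c\}$, so $\Q^*$ is not a comb. The proposed surgery cannot repair this: $\{b\}$ cannot be rerouted along $P$ to either endvertex of the segment without colliding with $\{a\}$ or $\{c\}$, and simply discarding $\{b\}$ is forbidden by your maximality of $|\Q|$.

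The paper's proof sidesteps this by optimising over $H$--$(X\cup Y)$ paths rather than $H$--$\bigcup\P$ paths: it takes a maximum family of disjoint $H$--$(X\cup Y)$ paths (so $|\Q|\le|X\cup Y|=2|\P|$ automatically), with a secondary minimisation of the number of edges \emph{outside} $\bigcup\P$, and then trims each path at its first vertex that is an endvertex of a $(\Q,H)$-segment. Your truncation step and the vertex-minimality idea correctly reproduce the effect of the paper's trimming and edge-minimisation, but if you want to keep the spirit of your argument you must drop the ``first maximise $|\Q|$'' step and replace it by an optimisation that keeps $|\Q|$ bounded (for instance, fix $|\Q|=t$ or optimise over $H$--$(X\cup Y)$ families as the paper does); otherwise direction~(b) genuinely fails.
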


\begin{proof}
Let $\Q$ be a set of as many disjoint $H$--$\,(X\cup Y)$ paths as possible, chosen with the least possible number of edges not in~$\bigcup\P$. By the maximality of~$\Q$, every endvertex $v$ of a $(\Q,H)$-segment of a path $P\in\P$ lies on a path $Q\in\Q$.%
   \COMMENT{}
   By our choice of~$\Q$, the final segment $vQ$ of $Q$ then lies in~$P$. Deleting the final segments $\interior v Q$ after $v$ for each such endvertex of a $(\Q,H)$-segment turns $\Q$ into an $H$--$\,\P$ comb.%
   \COMMENT{}
\end{proof}

While it is not typically true that a subset of a comb will again be a comb, the following is true.  We omit the straightforward proof.
\begin{Observation}\label{obs:subcomb}
Let $\P$ be a linkage and $H$ a subgraph in a graph $G$.  Let $\mathcal{R}$ be an $H - \P$ comb.  Then for any sublinkage $\P'$ of $\P$, the linkage $$\mathcal{R}' : = \{R \in \mathcal{R} : \text{ there exists a $(\mathcal{R}, H)$-segment in $\P'$ sharing an endpoint with $R$}\}$$ is a $H- \P'$ comb.
\end{Observation}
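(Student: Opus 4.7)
The plan is to show that for every $P \in \P'$, the $(\mathcal{R}', H)$-segment of $P$ coincides with the $(\mathcal{R}, H)$-segment of $P$; once this is established, the comb property of $\mathcal{R}'$ with respect to $\P'$ transfers directly from that of $\mathcal{R}$ with respect to $\P$.

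First I would rephrase the definition of $\mathcal{R}'$. By the comb property of $\mathcal{R}$, every endvertex of an $(\mathcal{R}, H)$-segment of a path in $\P$ is the final vertex of some $R \in \mathcal{R}$, and by disjointness of $\mathcal{R}$ this $R$ is unique. Hence (modulo the degenerate case of trivial $\mathcal{R}$-paths sitting on $H$, which is handled analogously) $R \in \mathcal{R}'$ is equivalent to saying that the final vertex of $R$ lies on some path in $\P'$. In particular the paths in $\mathcal{R}'$ are disjoint, start in $H$ with no further vertex in $H$, and end in $\bigcup \P'$, which are the preliminary conditions needed to call $\mathcal{R}'$ an $H$-$\P'$ comb candidate.

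I would then fix $P \in \P'$ and let $a, b$ denote the endvertices of the $(\mathcal{R}, H)$-segment of $P$. By the comb condition on $\mathcal{R}$, the vertices $a$ and $b$ are final vertices of paths $R_a, R_b \in \mathcal{R}$; since $a, b \in V(P)$ and $P \in \P'$, both $R_a$ and $R_b$ lie in $\mathcal{R}'$, and so $\{a,b\} \subseteq \bigcup \mathcal{R}' \cup H$. Conversely, maximality of the $(\mathcal{R}, H)$-segment guarantees that every vertex of $P$ strictly outside this segment misses $\bigcup \mathcal{R} \cup H$ and therefore also $\bigcup \mathcal{R}' \cup H$. Combining the two observations, $a$ and $b$ are the first and last intersections of $P$ with $\bigcup \mathcal{R}' \cup H$, so the $(\mathcal{R}', H)$-segment of $P$ equals the $(\mathcal{R}, H)$-segment of $P$.

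The comb condition for $\mathcal{R}'$ would then follow at once: the set of endvertices of $(\mathcal{R}', H)$-segments of paths in $\P'$ equals the set of endvertices of $(\mathcal{R}, H)$-segments of paths in $\P'$, which by the comb property of $\mathcal{R}$ is precisely the set of final vertices of those $R \in \mathcal{R}$ whose final vertex lies on $\bigcup \P'$ --- i.e., the final vertices of paths in $\mathcal{R}'$. I do not expect a real obstacle here; the only subtle point is the identification of the two segments in the previous step, which relies precisely on the comb property of $\mathcal{R}$: it is this property that forces the endpoints of the a priori larger $(\mathcal{R}, H)$-segment to already lie in $\bigcup \mathcal{R}' \cup H$, so that no genuine enlargement actually occurs.
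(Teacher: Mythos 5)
Your proof is correct, and the key step — showing that for every $P\in\P'$ the $(\RR',H)$-segment of $P$ coincides with the $(\RR,H)$-segment of $P$, so that the comb property passes down verbatim — is exactly the right observation. The paper explicitly omits its own proof of this observation as ``straightforward,'' so there is no argument in the text to compare against; your write-up supplies the natural argument cleanly, including the correct use of the comb condition on $\RR$ to identify each segment endvertex with the terminal vertex of a (unique, by disjointness) path of $\RR$, which is what forces those endvertices to already lie in $\bigcup\RR'\cup H$.
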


We finally turn to linkages in graphs that are, for the most part, embedded in a cylinder.
Let $C_1, \dots, C_s$ be disjoint cycles.  A linkage $\P$ is \emph{orthogonal} to $C_1,\dots, C_s$ if for all $P \in \mathcal{P}$, $V(P) \cap V(C_i)\neq \emptyset$ for all $1 \le i \le s$ and $P$ intersects the cycles $C_1, C_2, \dots, C_s$ in that order when traversing $P$ from one endpoint to the other.  Moreover, each of the graphs $P\cap C_i$ is a path (possibly consisting of a single vertex).
      The next lemma is a weaker version of Theorem 10.1 of~\cite{JORG2}.  We include its proof for completeness.

\begin{Lemma}\label{lem:ortholink}
Let $s$, $s'$, and $t$ be positive integers with $s\ge  s' + t $. Let $G'$ be a graph embedded in the plane and let $(C_1, \dots, C_s)$ be concentric cycles in $G'$.
Let $G''$ be another graph, with $V(G')\cap V(G'') \subseteq V(C_1)$. Assume that $G'\cup G''$ contains an $X$--$Y$ linkage $\P = \{P_1, \dots, P_t\}$  with $ X \subseteq C_s$ and $Y \subseteq C_1$.  Then there exist concentric cycles $(C_1', \dots, C'_{s'})$ in $G'$, a set $X' \subseteq V(C_s')$, and an $X'$--$Y$ linkage $\mathcal{P}'$ in $G'\cup G''$ such that $\mathcal{P}'$ is orthogonal to $C_1', \dots, C_{s'}'$.
\end{Lemma}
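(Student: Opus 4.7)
The plan is to use a minimization/exchange argument. Let $\mathcal{L}$ denote the family of all $X''$--$Y$ linkages of size $t$ in $G'\cup G''$ with $X''\subseteq V(C_s)$ and the same set of $Y$-endpoints as $\P$; this family is nonempty since $\P\in\mathcal{L}$. Choose $\P''\in\mathcal{L}$ minimizing $|E(\bigcup\P'')\cap E(G')|$, with ties broken by a lexicographic secondary measure (such as minimizing the total number of vertices of $\bigcup\P''$ that lie strictly inside the innermost disc $D_s$). I will argue that in this optimally chosen linkage, each path of $\P''$ meets each cycle $C_i$ in a single (possibly trivial) subpath, with at most $t$ cycles excluded.

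The core of the argument is a shortcut exchange. Suppose some $P\in\P''$ meets $C_i$ in two distinct maximal subpaths $A_1$ and $A_2$, encountered in that order when $P$ is traversed from its $C_s$-endpoint to its $C_1$-endpoint. Let $Q$ be the subpath of $P$ between $A_1$ and $A_2$; its interior is disjoint from $C_i$ and lies either entirely inside $D_i$ or entirely in $\Sigma\setminus D_i$. By the planarity of $G'$, the curve $Q$ together with one of the two arcs of $C_i$ between the last vertex of $A_1$ and the first vertex of $A_2$ bounds a disc $\Delta\subseteq\Sigma$; choose the arc so that $\Delta$ is the smaller region, the one not containing all of $C_1,\ldots,C_{i-1}$ (in the case of an exterior detour) or $C_{i+1},\ldots,C_s$ (for an interior detour). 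Since every path of $\P''$ has endpoints in $V(C_s)\cup Y$, and these endpoints lie outside $\Delta$, no other path of $\P''$ can be entirely contained in $\Delta$. Provided the chosen arc of $C_i$ is not blocked by a vertex of another path of $\P''$, we can replace $Q$ by this arc to obtain a new linkage $\P'''\in\mathcal{L}$ of strictly smaller complexity, contradicting the choice of $\P''$. The exchange can fail only on cycles $C_i$ where the remaining $t-1$ paths block every feasible shortcut, and by a careful counting argument using the disjointness of the paths, this occurs for at most $t$ cycles.

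Since $s\geq s'+t$, we can then select $s'$ cycles from $C_1,\ldots,C_s$ at which the exchange does not fail, say $C_1',\ldots,C_{s'}'$ in their concentric order. Truncating each $P\in\P''$ at its last vertex on the innermost chosen cycle $C_{s'}'$ yields the desired $X'$--$Y$ linkage $\P'$, orthogonal to $C_1',\ldots,C_{s'}'$, with $X'\subseteq V(C_{s'}')$. The main obstacle I expect will be the bookkeeping for the exchange step: properly handling the trapped-path and blocked-arc cases, particularly when $P$ traverses $G''$ (which attaches only along $V(C_1)$), and verifying the bound of at most $t$ bad cycles. Here the planarity of $G'$ and the hypothesis $V(G')\cap V(G'')\subseteq V(C_1)$ play essential roles, since they ensure that detours into $G''$ do not affect cycles $C_i$ with $i\geq 2$ and that the bounded region $\Delta$ really contains no endpoints of other paths.
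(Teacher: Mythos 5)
Your core exchange idea — shortcut a detour of $P$ at $C_i$ through an arc of $C_i$ — is the natural starting move, and the paper uses a close relative of it. But the proposal has two genuine gaps. First, the measure you minimize, $|E(\bigcup\P'')\cap E(G')|$, is not guaranteed to decrease under the exchange: you replace edges of the detour $Q$ (which lie in $G'$) by edges of an arc of $C_i$ (which also lie in $G'$), and the arc may well be longer than $Q$; neither does your secondary tie-breaker (vertices inside $D_s$) obviously improve, since the exchange at $C_i$ may happen far from $D_s$. The paper avoids this by taking an edge-minimal \emph{counterexample graph} $G=\bigcup_i C_i\cup\bigcup\P$, not an optimal linkage in a fixed graph; that lets it delete edges outright and leads to the key structural consequence that the surviving linkage is \emph{singular}.

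Second, and more seriously, the assertion that the exchange ``fails for at most $t$ cycles'' is stated but never substantiated, and I don't see a direct counting argument: a single blocking path $P'$ can obstruct the shortcut arc on many different cycles $C_i$, so the number of bad cycles is not a priori bounded by the number of paths. This is exactly where the paper needs machinery you don't invoke: it first shows (via the minimal counterexample) that the whole graph is a singular linkage of size $t$, then applies Lemma~\ref{lem:singlink} to bound its path-width by $t$, and finally argues that if the tail $P-\overline P$ of any path ever reached back past $C_{t+1}$ one could extract a subdivided $(t+1)\times(t+1)$ grid, contradicting the path-width bound. That grid/path-width argument is what converts ``$t$ paths'' into ``lose at most $t$ cycles''; without a replacement for it, the ``careful counting argument'' in your write-up is a gap rather than a proof. (You would also still need to check that connected intersections with each chosen $C'_j$ force the crossings to occur in the correct concentric order once excursions into $G''$ through $C_1$ are taken into account, but this is a smaller point than the two above.)
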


\begin{proof}
Assume the lemma is false, and let $G'$, $G''$, $\mathcal{P}$, and $(C_1, \dots, C_s)$ form a counterexample containing a minimal number of edges.  To simplify the notation, we let $G = G' \cup G''$.
By minimality, it follows that the graph
$G = \bigcup_1^s C_i \cup \mathcal{P}$.  Also, for all $P \in \mathcal{P}$ and for all $1 \le i \le s$, every component of $P \cap C_i$ is a single vertex.  If $P \cap C_i$ had a component that was a non-trivial
path containing an edge $e$, then $G'/e$ would form a counterexample with fewer edges.
Similarly, we conclude that $V(G) = V(\mathcal{P})$.

Note that no subpath $Q \subseteq \mathcal{P} \cap G'$ that is internally disjoint from $\bigcup_1^s C_i$ has
both endpoints contained in $C_j$ for some $1 \le j \le s$.  There are two cases to consider.  If $Q \subseteq \Delta(C_s)$, we violate our choice of a minimal counterexample by restricting the $\P$ path containing $Q$ to a subpath from $Y$ to $V(C_s)$ avoiding the edges of $Q$.  If $Q \nsubseteq \Delta(C_s)$, we could reroute $C_j$ through the path $Q$ to find $s$ concentric cycles in $G'$ and again contradict our choice of a counterexample containing a minimal number of edges.
We claim:
\begin{equation}\label{cl:2}
\emtext{
The graph $G$ consists of a singular linkage.
}
\end{equation}
To see that the claim is true, observe that $E(\mathcal{P})$ is disjoint
from $E\left ( \bigcup_1^s C_i\right)$.  It follows that
if there exists a linkage $\overline{\mathcal{P}}$ from $X$ to $Y$ distinct
from $\mathcal{P}$, then at least one of the edges of $\mathcal{P}$ is not contained in
$\overline{\mathcal{P}}$.  We conclude that the subgraph $\bigcup_1^s C_i \cup \overline{\mathcal{P}}$
forms a counterexample to the claim with fewer edges, a contradiction.  This proves $(\ref{cl:2})$.

A \emph{local peak} of the linkage $\mathcal{P}$ is a subpath $Q \subseteq \mathcal{P}$
such that $Q$ has both endpoints on $C_j$ for some $j>1$ and every internal vertex
of $Q \cap \left ( \bigcup_{i \neq j} V(C_i) \right) \subseteq V(C_{j-1})$.  As we have seen
above, it must then be the case that $V(Q) \cap V(C_{j-1}) \neq \emptyset$ when $j>1$.

We claim the following.
\begin{equation}\label{cl:3}
\emtext{
For all $j>1$, there does not exist a local peak with endpoints in $C_j$.
}
\end{equation}
Fix $Q$ to be a local peak with endpoints in $C_j$ with $Q$ chosen over
all such local peaks so that
$j$ is maximal.  Assume $Q$ is a subpath of $P \in \mathcal{P}$.
Let the endpoints of $Q$ be $x$ and $y$.  Lest we re-route $P$ through $C_j$
and find a counter-example containing fewer edges, there exists
a component $P' \in \mathcal{P}$ intersecting the subpath of $C_j $
linking $x$ and $y$.  By planarity, $P'$ either contains a subpath internally disjoint from the union of the $C_i$ with both endpoints in $C_s$, or $P'$ contains a subpath forming
a local peak with endpoints in $C_{j-1}$. Either is a contradiction to our choice of a minimal counterexample.  This proves $(\ref{cl:3})$.

An immediate consequence of $(\ref{cl:2})$ and $(\ref{cl:3})$ is the following.
For every $P \in \mathcal{P}$, let $x$ be the endpoint of $P$ in $X$ and
let $y$ be the vertex of $V(C_1) \cap V(P)$ closest to $x$ on $P$.
Define the path $\overline{P}$ be the subpath $xPy$ of $P$.
The path $\overline{P}$ is orthogonal to the cycles $C_1, \dots, C_s$.
In fact, $\overline{P} \cap C_i$ is a single vertex for each $1 \le i \le s$.  The final claim will complete
the proof.
\begin{equation}\label{cl:4}
\emtext{
For all $P \in \mathcal{P}$, the path $P - \overline{P}$ does not intersect $C_{t+1}$.
}
\end{equation}
To see $(\ref{cl:4})$ is true, fix $P \in \mathcal{P}$ such that
 $(P - \overline{P}) \cap C_{t+1} \neq \emptyset$.
It follows now from $(\ref{cl:3})$
 that $P - \overline{P}$ contains a subpath $Q$ with one endpoint in $C_{t+1 }$
and one endpoint in $C_1$ such that $Q$ is orthogonal to the cycles $C_{t+1}, C_{t}\dots, C_{1}$.
By the planarity of $G'$,
we see that $G$
contains a subgraph isomorphic to the subdivision of the $(t+1)\times (t+1)$ grid.  This
contradicts $(\ref{cl:2})$ and Lemma \ref{lem:singlink}, proving $(\ref{cl:4})$.

We conclude that $\mathcal{P}$ is orthogonal to the $s'$ disjoint cycles $C_{s}, C_{s-1}, \dots, C_{t+1}$.  This contradicts our choice of $G$, and the lemma is proven.
\end{proof}

%%%%%%%%%%%%%%%%%%%%%%%%%%%%%%%%%%%%%%%%%%%%%%%

\section{Linking the wall to a vortex}\label{sec:link}

Consider a graph $G$ satisfying Hypothesis H($G,r$). Our first aim in this section is to find a large linkage from a cycle deep inside~$H$ to vertices of small degree in~$G'_0$. By Lemma~\ref{lem:ortholink} we shall be able to assume that this linkage is orthogonal to a pair of cycles $C$ and $C'$. If the many of the last vertices of our linkage send many edges to~$A$, or an edge to a component of $G_0-G'_0$ (which in turn sends many edges to~$A$, by the connectivity of~$G$), we shall be able to convert the cycles $C$ and $C'$, the linkage, and those neighbours into an $F(kp, k(p-3))$-minor, completing the proof. If not, then most of those last vertices send many edges into vortices. As we have only a bounded number of vortices, many send their edges to the same vortex. That case we shall treat in Section~\ref{lastsection}.

\begin{Lemma}\label{lem:biglink}
For all positive integers $t$ and $s$ there exists an integer $R = R(s,t)$ such for every graph $G$ satisfying Hypothesis ${\rm H}(G,r)$ with $r \ge R$ there are $t$ disjoint $X$--$\,Y$ paths in~$G'_0$, where $X$ is the vertex set of the $s$'th boundary cycle $C_s$ of~$H$, and $Y:= \{v \in V(G'_0): \deg_{G'_0}(v) < 10p\}$.%
   \COMMENT{}
   \end{Lemma}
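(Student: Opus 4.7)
The plan is to argue by contradiction using Menger's theorem, with the contradiction coming from Euler's formula applied to the bounded-genus surface~$\Sigma$ in which $G'_0$ is embedded.

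Suppose $G'_0$ does not contain $t$ pairwise disjoint $X$--$\,Y$ paths. By Menger's theorem there is a set $S\subseteq V(G'_0)$ with $|S|<t$ such that every $X$--$\,Y$ path in~$G'_0$ meets~$S$. Let $U$ be the union of the vertex sets of the components of $G'_0-S$ that meet $X=V(C_s)$. By Hypothesis~H$(G,r)$, every vertex of $V(C_s)\subseteq\Delta(\Sigma,H)$ has degree at least $10p$ in~$G'_0$, so $V(C_s)\cap Y=\emptyset$; more generally, since $S$ separates $X$ from~$Y$, no vertex of $U$ lies in~$Y$, whence $\deg_{G'_0}(v)\ge 10p$ for every $v\in U$. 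Because $U$ is a union of components of $G'_0-S$, every $G'_0$-edge incident with $U$ has its other endvertex in $U\cup S$, and therefore
\[ 10p\cdot|U|\;\le\;\sum_{v\in U}\deg_{G'_0}(v)\;=\;2|E(G'_0[U])|+|E(U,S)|\;\le\;2|E(G'_0[U\cup S])|. \]

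By Theorem~\ref{richthm} the surface $\Sigma$ admits no embedding of $kK_p$, and hence its Euler genus is bounded by some constant $g=g(k,p)$. Euler's formula then gives a constant $c=c(k,p)$ such that every simple subgraph $J$ of~$G'_0$ satisfies $|E(J)|\le 3|V(J)|+c$. Applying this to $J=G'_0[U\cup S]$ and using $|S|<t$ turns the displayed inequality into $10p\cdot|U|\le 6|U|+6t+2c$, so
\[ |U|\;\le\;N\;:=\;\frac{6t+2c}{10p-6}, \]
a quantity depending only on~$t$, $p$, and~$k$.

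On the other hand $|U|\ge|X\setminus S|\ge|C_s|-t$, and the length of $C_s$ grows with~$r$: indeed $C_s$ is the outer cycle of a subdivided $(r-2(s-1))$-subwall obtained from $H$ by deleting the outer $s-1$ boundary cycles. Choosing $R=R(s,t)$ large enough that $|C_s|>N+t$ for every $r\ge R$ therefore forces $|U|>N$, the desired contradiction. The whole argument is a direct piece of Menger-plus-Euler bookkeeping; the only non-routine ingredient is verifying the lower bound on $|C_s|$, which is immediate from the combinatorial structure of walls.
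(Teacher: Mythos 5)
Your proof is correct and follows essentially the same route as the paper's: assume the linkage fails, invoke Menger's theorem to obtain a small separator between $X=V(C_s)$ and $Y$, observe that the $X$-side consists entirely of vertices of $G'_0$-degree at least $10p$, and derive a contradiction from Euler's formula for graphs embeddable in the fixed surface $\Sigma$ once $r$ (and hence $|C_s|$) is large enough. The only cosmetic difference is that the paper phrases the Menger step as a separation $(A,B)$ and bounds $|E(G'_0[A])|\ge 5p|A\setminus B|$ directly, while you isolate the union $U$ of $X$-side components of $G'_0-S$; the bookkeeping is equivalent.
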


\begin{proof}
If the desired paths do not exist then, by Menger's theorem, $G'_0$~has a separation $(A, B)$ of order less than $t$ with $X \subseteq A$ and
$Y\subseteq B$. By the choice of~$Y$, every vertex in $A\setminus B$ has degree at least $10p$ in~$G'_0$. The sum of all these degrees is at least $10p\, |A\setminus B|$, so $G'_0 [A]$ has at least $5p\, |A\setminus B|$ edges. As $|A|\ge |X|\ge r-4s$, and $\Sigma$ is determined by our constants $p$ and~$k$, choosing $R$ sufficiently large in terms of $s$ and~$t$ yields%
   \COMMENT{}
 $$ 5p\, |A\setminus B|\ge 5p\,(|A| - t) > 3\,|A| - 3\,\chi(\Sigma),$$
 which is the maximum number of edges a graph of order $|A|$ embedded in $\Sigma$ can have (by Euler's formula). As $G'_0[A]$ is such a graph, this is a contradiction. %
   \COMMENT{}
\end{proof}

Our next lemma says that by rerouting the paths if necessary we can make the linkage from Lemma~\ref{lem:biglink} orthogonal to two concentric cycles. Recall that the wall $H$ in Hypothesis~${\rm H}(G,r)$ is flat; we think of the topological disc $\Delta(\Sigma,H)\subseteq\Sigma$, which contains $H$ and is bounded by its outer cycle~$C_1$, as a disc in~$\R^2$.%
   \COMMENT{}

\begin{Lemma}\label{lem:2cyc}
Let $t$ be an integer. Let $G$ be a graph satisfying Hypothesis ${\rm H}(G,r)$ for some~$r$ large enough that $H$ has boundary cycles $C_1,\dots,C_{t+2}$. Suppose further that $G'_0$ contains an $X$--$\,Y$ linkage $\P$ of order~$t$, where $X\subseteq V(C_{t+2})$ and $Y\subseteq V(G'_0)\setminus \Delta(\Sigma,H)$.   Then $\bigcup\P\cup C_1\cup\ldots\cup C_{t+2}\subseteq G'_0$ contains disjoint cycles $C'_1, C'_2$ in $G_0' \cap \Delta(\Sigma, H)$, and an $X'$--$Y$ linkage%
   \COMMENT{}
   orthogonal to~$C'_1,C'_2$ with $X'\subseteq V(C_{t+2})$.
\end{Lemma}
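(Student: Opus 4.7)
The plan is to reduce Lemma~\ref{lem:2cyc} to an application of Lemma~\ref{lem:ortholink} inside the disc $\Delta:=\Delta(\Sigma,H)$. I would first truncate each $P\in\P$ at its last crossing of the outer boundary cycle~$C_1$ so that the ``inner'' portions form a linkage from $X$ to a subset of $V(C_1)$ entirely inside~$\overline{\Delta}$. Then I would apply the orthogonal linkage lemma to this inner linkage, producing the concentric cycles $C'_1,C'_2$ and an orthogonal linkage, and finally reattach the outer tails to recover a linkage to~$Y$.

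More precisely, since $H$ is flat, $\Delta$ is a closed disc with boundary~$C_1$, and the concentric cycles $C_1,\ldots,C_{t+2}$ all lie in~$\overline{\Delta}$, with $C_1$ outermost and $C_{t+2}$ innermost. Each $P\in\P$ starts at $x_P\in X\subseteq V(C_{t+2})$ strictly inside $\Delta$ and ends at $y_P\in Y$ outside~$\Delta$; since edges of $G'_0$ in the surface embedding cannot cross the topological boundary of $\Delta$ except through a vertex of~$C_1$, every such $P$ meets~$V(C_1)$. Let $z_P$ be the \emph{last} such vertex when $P$ is traversed from $x_P$ to~$y_P$, and set $P^{\rm in}:=x_PPz_P$, $P^{\rm out}:=z_PPy_P$. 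By the choice of~$z_P$, the interior of $P^{\rm out}$ avoids $V(C_1)$ and hence $\overline{\Delta}$ itself (any vertex of $P^{\rm out}-z_P$ inside $\Delta$ would force a further $C_1$-crossing on the way out to~$y_P$). Thus $\P^{\rm in}:=\{P^{\rm in}:P\in\P\}$ is contained in the planar subgraph $G':=\bigcup\P^{\rm in}\cup C_1\cup\ldots\cup C_{t+2}$ of $G'_0\cap\overline{\Delta}$, and is an $X$--$Z$ linkage of order~$t$ with $Z:=\{z_P:P\in\P\}\subseteq V(C_1)$.

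Next I would apply Lemma~\ref{lem:ortholink} with this~$G'$, trivial $G''$, the cycles $C_1,\ldots,C_{t+2}$, the linkage~$\P^{\rm in}$, and parameters $s:=t+2$ and $s':=2$; the arithmetic requirement $s\ge s'+t$ then holds with equality. This yields disjoint concentric cycles $C'_1,C'_2\subseteq G'\subseteq G'_0\cap\Delta$, a set $X'\subseteq V(C_{t+2})$, and an orthogonal $X'$--$Z$ linkage $\P'=\{Q_P:P\in\P\}$, indexed so that $Q_P$ ends at~$z_P$. I would then concatenate each $Q_P$ with its tail $P^{\rm out}$ to obtain a path from $X'$ to~$y_P\in Y$. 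Because $\bigcup\P'\subseteq\overline{\Delta}$ while each $P^{\rm out}-z_P$ lies outside~$\overline{\Delta}$, the tails are internally disjoint from~$\bigcup\P'$, and they are mutually disjoint as subpaths of the pairwise disjoint original paths in~$\P$. Since $C'_1,C'_2\subseteq\overline{\Delta}$, appending $P^{\rm out}$ adds at most the vertex~$z_P$ to the intersection of the extended path with each~$C'_i$, so the intersection with each $C'_i$ remains a single subpath and the orthogonal order is preserved. The result is the required $X'$--$Y$ linkage, all contained in $\bigcup\P\cup C_1\cup\ldots\cup C_{t+2}$.

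The main obstacle will be the topological bookkeeping that cleanly separates the inner linkage $\P'$ from the reattached tails~$P^{\rm out}$. The pivotal point, read off from the surface embedding of~$G'_0$, is that a path of $G'_0$ can pass between the inside and the outside of the embedded disc $\Delta$ only through a vertex of its boundary cycle~$C_1$; picking $z_P$ as the \emph{last} such vertex along $P$ is precisely what forces $P^{\rm out}-z_P$ to lie in the exterior of~$\overline{\Delta}$, and this is what guarantees that reattaching the tails does not disturb the orthogonal crossings of~$\P'$ with~$C'_1$ and~$C'_2$ achieved by Lemma~\ref{lem:ortholink}.
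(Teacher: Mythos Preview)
Your approach mirrors the paper's exactly---truncate, apply Lemma~\ref{lem:ortholink}, reattach---but there is a genuine gap in the truncation step. You assert that ``$\P^{\rm in}:=\{P^{\rm in}:P\in\P\}$ is contained in the planar subgraph $G':=\bigcup\P^{\rm in}\cup C_1\cup\ldots\cup C_{t+2}$ of $G'_0\cap\overline{\Delta}$'', and then invoke Lemma~\ref{lem:ortholink} with \emph{trivial}~$G''$. The argument you give for this, however, only shows that $P^{\rm out}-z_P$ lies outside~$\overline{\Delta}$; it says nothing about~$P^{\rm in}$. A path $P\in\P$ may well cross $C_1$ several times before its final crossing at~$z_P$: it might run from $x_P$ to some $a_1\in C_1$, exit~$\Delta$, re-enter at~$a_2\in C_1$, wander back inside, and only then leave for good through~$z_P$. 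The segment $a_1Pa_2$ is then part of $P^{\rm in}$ but lies outside~$\overline{\Delta}$, so your $G'$ need not be contained in the disc and hence need not be planar; Lemma~\ref{lem:ortholink} cannot be applied with trivial~$G''$.

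This is precisely why Lemma~\ref{lem:ortholink} is stated with a second graph~$G''$ attached along~$C_1$, and why the paper's proof uses it non-trivially: one places the in-disc segments of the truncated paths into~$G'$ and the out-of-disc segments into~$G''$, so that $G'$ is genuinely planar while $\P^{\rm in}$ still lives in $G'\cup G''$. Your reattachment and orthogonality-preservation arguments are fine once this is corrected, since the new linkage produced by Lemma~\ref{lem:ortholink} lies in $G'\cup G''=\bigcup\P^{\rm in}\cup C_1\cup\ldots\cup C_{t+2}$, which the $Y'$--$Y$ tails meet only at~$Y'$ by choice of~$z_P$. (Switching to the \emph{first} crossing instead would make $P^{\rm in}\subseteq\overline{\Delta}$ but would break the reattachment, since the tails could then re-enter the disc and collide with the new linkage or the new cycles.)
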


\begin{proof}
   Let $Y'$ be the set of the last vertices in $\Delta(C_1)$ of paths in~$\P$; this is a subset of~$V(C_1)$. Let $\P'$ be the set of $X$--$Y'$ paths contained in the paths in~$\P$ (one in each).
Let $G'$ be the union of all the cycles $C_1,\dots,C_{t+2}$ and the subpaths in~$\Delta(C_1)$ of paths in~$\P'$. Then $G'$ is planar and $(C_1, \dots, C_{t+2})$ form a set of concentric cycles. Let $G''$ be the union of the remaining segments of paths in~$\P'$; then $G'\cup G'' = \bigcup\P'\cup C_1\cup\ldots\cup C_{t+2}\,$, and $V(G')\cap V(G'')\subseteq V(C_1)$. Applying Lemma~\ref{lem:ortholink} to the linkage $\P'$ in $G'\cup G''$, we obtain a two disjoint cycles $C'_1$ and $C'_2$ contained in $\Delta(C_1)$ and a set $X' \subseteq(C_2')$ such that the cycles are orthogonal to an $X'$--$Y'$ linkage~$\P''$ in $G'\cup G''$. Append to this linkage the $Y'$--$Y$ paths contained in the paths from~$\P$ (which meet $G'\cup G''$ only in~$Y'$, by the choice of~$Y'$) to obtain the desired linkage for the lemma.
\end{proof}

\noindent
   Note that the linkage obtained in Lemma \ref{lem:2cyc} has the same order as~$\P$, since the target set $Y$ remained unchanged. The proof of the lemma could clearly be modified to provide
a set of $s$ cycles orthogonal to the linkage, for arbitrary~$s$, rather than just two, but we shall only need two in the following arguments.

\bigbreak

We return now to the linkage provided by Lemma \ref{lem:biglink}. Using Lemma~\ref{lem:2cyc}, we show that all but a bounded number of the paths of that linkage lie in~$G$ (that is, contain no virtual edges) and end in vortices.

\begin{Lemma}\label{lem:2}
Let $t\ge 3kp\binom{\alpha}{k(p-3)}$, and let $G$ be a graph satisfying Hypothesis ${\rm H}(G,r)$, for some $r$ large enough that the $(t+2)$th boundary cycle $C_{t+2}$ of~$H$ exists. Then $G'_0$ contains no $X$--$\,(Y\cup Z)$ linkage of order~$t$ such that $X\subseteq V(C_{t+2})$, the set $Z$ contains vertices of $\bigcup_{V \in \W} \Omega(V)$, and $Y\subseteq \{v \in V(G'_0): \deg_{G'_0}(v) < 10p\}\setminus \bigcup_{V \in \V \cup \W} \Omega(V)$.
\end{Lemma}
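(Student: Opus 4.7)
The plan is, assuming for contradiction that such a linkage $\P$ of order~$t$ exists, to extract an $F(kp,k(p-3))$-minor (and hence a $kK_p$-minor) from~$G$, contradicting Hypothesis~H$(G,r)$. I first normalise~$\P$ using Lemma~\ref{lem:2cyc}: the set $Y\cup Z$ is disjoint from $G'_0\cap\Delta(\Sigma,H)$ (by property~$ii$ of H$(G,r)$, and because $Y$ avoids every $\Omega(V)$), so Lemma~\ref{lem:2cyc} supplies two concentric cycles $C'_1,C'_2\subseteq G'_0\cap\Delta(\Sigma,H)$ and an $X'$--$(Y\cup Z)$ linkage~$\P'$ of order~$t$ orthogonal to~$C'_1,C'_2$. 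For each~$i$ I write $y_i$ for the outer endpoint of $P'_i\in\P'$, and $p_i$, $q_i$ for its crossings of the outer cycle~$C'_1$ and inner cycle~$C'_2$.

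Next, for every~$i$ I build a connected ``gadget'' $T_i\subseteq G-A-(G'_0\cap\Delta(\Sigma,H))$ containing~$y_i$ together with a set $A_i\subseteq A\cap N_G(T_i)$ of size exactly $k(p-3)$. If $y_i\in Y$, then $y_i$ has no neighbours in any vortex interior, so its at least $k(p-3)+4p+14$ neighbours outside $G'_0$ (which exist because $\deg_G(y_i)\ge k(p-3)+14p+14$ while $\deg_{G'_0}(y_i)<10p$) all lie in $A\cup(G_0-G'_0)$; either $y_i$ itself sees $k(p-3)$ vertices of~$A$ and I take $T_i=\{y_i\}$, or $y_i$ is adjacent to some component $K$ of $G_0-G'_0$ which by the proof of Lemma~\ref{lem:cleangrid} has $\ge k(p-3)$ neighbours in~$A$, and I take $T_i=\{y_i\}\cup V(K)$. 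If $y_i\in Z$ with $y_i\in\Omega(V)$ for some $V\in\W$, the key observation is that the internals $V-\Omega(V)$ have all their non-$V$ neighbours in~$A$; since $|\Omega(V)|\le 3$, the $(k(p-3)+14p+14)$-connectivity of $G$ forces $|A\cap N_G(V-\Omega(V))|\ge k(p-3)+14p+11$, and the choice $T_i=\{y_i\}\cup(V-\Omega(V))$ is connected by proper attachment. (The degenerate subcase $V=\Omega(V)$ reduces to the $Y$-style analysis via $y_i$'s $G_0-G'_0$ components.)

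Having fixed such an $A_i$ for every~$i$, two pigeonhole reductions carve $\P$ down to a workable sublinkage. First, there are at most $\binom{\alpha}{k(p-3)}$ choices for~$A_i$, so at least $3kp$ of them coincide with a common set~$A'$. Second, two gadgets $T_i,T_j$ among these share a vertex only if $y_i,y_j$ lie in a common small vortex (at most three such indices, since $|\Omega(V)|\le 3$) or both use a common component $K$ of $G_0-G'_0$ (at most three such indices, since $K$ is separated from $G'_0$ in $G-A$ by at most three vertices). The ``overlap graph'' on the $3kp$ selected indices therefore has maximum degree $\le 2$, and an independent set $I^*=\{i^*_1,\dots,i^*_{kp}\}$ of size $kp$ with pairwise disjoint gadgets exists; I relabel the $i^*_j$ in the cyclic order in which their crossings $p_{i^*_j}$ appear on~$C'_1$ (equivalently on $C'_2$, by orthogonality and planarity).

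Finally I assemble the $F(kp,k(p-3))$-minor in~$G$. The $k(p-3)$ singletons of $A'$ form the fan branch sets $w_1,\dots,w_{k(p-3)}$, and for each $j=1,\dots,kp$ (indices mod $kp$) the ladder branch sets are
$$u_j \;:=\; \bigl(\text{arc of $C'_2$ from $q_{i^*_j}$ to just before $q_{i^*_{j+1}}$}\bigr)\,\cup\,\bigl(P'_{i^*_j}[q_{i^*_j},p_{i^*_j}]\setminus\{p_{i^*_j}\}\bigr),$$
$$v_j \;:=\; \bigl(\text{arc of $C'_1$ from $p_{i^*_j}$ to just before $p_{i^*_{j+1}}$}\bigr)\,\cup\,P'_{i^*_j}[p_{i^*_j},y_{i^*_j}]\,\cup\,T_{i^*_j}.$$
These branch sets are pairwise disjoint by construction, and all edges of $F(kp,k(p-3))$ are realised: $u_j\sim u_{j+1}$ and $v_j\sim v_{j+1}$ via the endpoints of consecutive cycle arcs, $u_j\sim v_j$ via the edge of $P'_{i^*_j}$ entering $p_{i^*_j}$, and $v_j\sim w_\ell$ through the gadget $T_{i^*_j}$ since $A'\subseteq N_G(T_{i^*_j})$. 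Hence $kK_p\minor F(kp,k(p-3))\minor G$, contradicting H$(G,r)$. The main obstacle is the $y_i\in Z$ case of the gadget construction, which needs the $(k(p-3)+14p+14)$-connectivity to overcome the bounded society size of a small vortex and secure enough apex-neighbours for the pigeonhole step.
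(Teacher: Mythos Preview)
Your proof follows essentially the same strategy as the paper's: straighten the linkage via Lemma~\ref{lem:2cyc}, attach to each endpoint a connected ``gadget'' with $k(p-3)$ apex neighbours, pigeonhole on these apex-sets to a common~$A'$, and assemble an $F(kp,k(p-3))$-minor from the two cycles, the path tails, and~$A'$. The only structural difference is the order of the two reductions: the paper first greedily thins~$\P$ by a factor of~$3$ so that distinct $Z$-paths are assigned distinct small vortices (hence their gadgets are automatically disjoint), and only then pigeonholes on the sets~$A_P$; you pigeonhole first and afterwards pass to an independent set in your overlap graph. Both orderings lose the same factor $3\binom{\alpha}{k(p-3)}$ and arrive at the same~$kp$ paths.

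One point of looseness worth tightening: your $Z$-case gadget $T_i=\{y_i\}\cup(V-\Omega(V))$ need not be connected, since proper attachment only supplies paths between society vertices through the interior, not connectedness of the interior itself (nor, directly, an edge from $y_i$ into it). The paper instead takes a single component $C$ of $V(P)-G'_0$ adjacent to~$z$ and contracts it onto~$z$; you should argue similarly. Also, your $Y$-case alternative involving a ``component $K$ of $G_0-G'_0$'' is vacuous, since $V(G'_0)=V(G_0)$; this does no harm, as for $y_i\in Y$ all neighbours of~$y_i$ outside $G'_0$ already lie in~$A$, so the first option $T_i=\{y_i\}$ always succeeds.
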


\begin{proof}
Suppose there is an $X$--$\,(Y\cup Z)$ linkage in $G'_0$ as stated. Since $Y\cup Z\subseteq V(G'_0)\setminus \Delta(\Sigma,H)$,%
   \COMMENT{}
   Lemma~\ref{lem:2cyc} provides us with an $X'$--$\,(Y\cup Z)$ linkage $\P$ in $G'_0$ that is orthogonal to two cycles $C'_1,C'_2$ contained in $\Delta(\Sigma,H)$, where again $X'\subseteq V(C_{t+2})$. Since $\P$ has the same target set $Y\cup Z$ as the original linkage, it also has the same order $t\ge 3kp\binom{\alpha}{k(p-3)}$.

Each path in $\P$ contains at most one vertex contained in a vortex, since this will be its last vertex.  Also, for every vortex $V \in \W$, $\Omega(V)$ intersects at most 3 paths in $\P$.
We find a subset $\P'$ of $\P$ of order $kp\binom{\alpha}{k(p-3)}$ and for every $P \in \P'$ such that $P$ intersects some vortex, we assign a vortex $V(P)$ such that for $P, Q \in \P'$, $V(P) \neq V(Q)$.  We can construct the subset $\P'$ and the vortex assignments greedily - we begin considering $\P$, and as long some path $P$ intersects an unassigned vortex $V$, we set $V(P):=V$ and delete any other path $Q$ with both $Q \cap \Omega(V) \neq \emptyset$ and $V(Q)$ undefined.

By definition of~$Y$ and the connectivity of~$G$, every last vertex $y\in Y\setminus Z$ of a path $P\in \P'$ has a set $A_P$ of $k(p-3)$ distinct neighbours in~$A$. By definition of~$Z$, every last vertex $z\in Z$ of a path $P\in\P'$ sends an edge to some component $C$ of $V(P)-G'_0$. In~$G-A$, a set of at most three vertices of $G'_0$ (which includes~$z$) separates $C$ from the rest of~$G'_0$. Hence by the connectivity of~$G$, the component $C$ has a set $A_P$ of $k(p-3)$ distinct neighbours in~$A$. For every such~$z$, contract the component $C$ on to~$z$. (By definition of~$\P'$ and the assignment $V(P)$, these $C$ are distinct, and hence disjoint, for different~$z$.) In the resulting minor $G'$ of~$G$, the vertex $z$ is adjacent to every vertex in~$A_P$ (for the $P\in\P'$ ending in~$z$).

Since $|\P'| = kp\binom{\alpha}{k(p-3)}$, there is a subset $\P''$ of~$\P'$ of order~$kp$ such that for all the paths $P\in\P''$ their sets $A_P$ conincide;%
   \COMMENT{}
   let us write $A'$ for this subset of $A$ of order~$k(p-3)$.

Of each path $P\in\P''$ let us keep only its segment $P'$ between $C'_2$ and~$C'_1$, contracting the final segment of $P$ that follows its vertex $v_P$ in $C'_1$ on to~$v_P$. In the minor $G''$ of~$G'$ obtained by all these contractions, the final vertices $v_P$ of the paths $P'$ with $P\in\P''$ are adjacent to all the $k(p-3)$ vertices in~$A'$. The cycles $C'_1$ and~$C'_2$, the $kp$ paths~$P'$ with $P\in\P''$, and the edges between the vertices $v_P$ and $A'$ together contain%
   \COMMENT{}
   a subdivided fan $F(kp, k(p-3))$. Thus,
 $$kK_p\minor F(kp, k(p-3))\minor G''\minor G'\minor G\,,$$
 a contradiction.
\end{proof}

Since $b\le\alpha$, Lemma~\ref{lem:2} implies that of paths from the linkage of Lemma \ref{lem:biglink} some unbounded number end in the same~$W_i$. These have small degree in~$G'_0$

%%%%%%%%%%%%%%%%%%%%%%%%%%%%%%%%%%%%%%%%%%%%%%%%

\section{Proof of Theorem \ref{thm:bigtw}}\label{lastsection}

Let $r$ be the integer $R(s,t)$ provided by Lemma~\ref{lem:biglink} for
$$t = 2\alpha\left ( kp\binom{2\alpha}{k(p-3)} + k\left (\binom{p}{2} + 1\right)\binom{\alpha}{ p} \right) + 3kp\binom{\alpha}{k(p-3)}$$
and $s = t+2$. Let $w$ be large enough that, by Lemma~\ref{lem:cleangrid}, every ($k(p-3)+14p+14$)-connected graph $G\not\Minor kK_p$ of tree-width at least~$w$ contains an $r$-wall $H$ such that $(G,H)$ satisfies Hypothesis~${\rm H}(G,r)$, for this~$r$.

For our proof of Theorem \ref{thm:bigtw}, let $G$ be a ($k(p-3)+14p+14$)-connected graph of tree-width at least~$w$; we have to show that $G\Minor kK_p$. Suppose not. Then $(G,H)$ satisfies Hypothesis~${\rm H}(G,r)$ for the value of $r$ defined above, by our choice of~$w$.

Let $C_1, C_2, \dots, C_{t+2}$ be the first $t+2$ boundary cycles of~$H$. By Lemma~\ref{lem:biglink}, there are $t$ disjoint paths in~$G'_0$ from $V(C_{t+2})$ to vertices of degree $<10p$ in~$G'_0$. By Lemma~\ref{lem:2}, all but at most $3kp\binom{\alpha}{k(p-3)}$ of these paths intersect exactly one vortex $V$ at their endpoints, and furthermore, this vortex $V$ is among the $\alpha'$ vortices of $\V$.  \COMMENT{}
 Since $\alpha'\le\alpha$, at least $1/\alpha$ of these paths end in the same vortex, say $V_a = (G_a, \Omega_a)$. These paths, then, form an $X$--$Y$ linkage $\P$ in~$G$ (i.e. the linkage does not contain any of the virtual edges of $G_0'$) of order
\begin{equation}\label{sizeP}
 |\P|\ge 2\left ( kp\binom{2\alpha}{k(p-3)} + k\left (\binom{p}{2} + 1\right)\binom{\alpha}{ p} \right),
\end{equation}
 with $X\subseteq V(C_{t+2})$ and $Y\subseteq \Omega_a =: \{w_1,\dots,w_m\}$.

Let us add to the graph~$G_a$ all the vertices from~$A$ (together with the edges they send to~$G_a$), putting them in every part of its vortex decomposition. This does not affect our assumption that this decomposition is linked, since every vertex in $A$ becomes a trivial path in the linkage through~$G_a$.
The new (induced) subgraph $G_a$ of $G$ has a path decomposition $(U_1,\dots,U_m)$ with the following properties (where $U^+_i := U_i\cap U_{i+1}=: U^-_{i+1})\,$:
\begin{itemize}
\item $A\subseteq U_i$ for all $i=1,\dots,m$;
\item $U_i\cap \Omega_a = \{w_{i-1},w_i\}$ for all $i=1,\dots,m$ with $w_0:=w_1$;
\item all the sets $U^+_i$ and $U^-_i$ have the same order~($\le 2\alpha$);
\item $G_a-\Omega_a$ contains a $(U^+_1\setminus\{w_1\})$--$(U^-_m\setminus\{w_{m-1}\})$ linkage~$\Q$.
\end{itemize}

For each $i=0,\dots,m$,%
   \COMMENT{}
   let $H_i = G[U_i\cup U_{i+1}]$ (putting $U_0 = \{w_1\}$ and $U_{m+1} =\{w_m\}$).%
   \COMMENT{}
   The set $U_i^-\cup U_{i+1}^+\cup\{w_i\}$ of size at most~$4\alpha+1$ separates $H_i$ from the rest of~$G$ (put $U_0^- = U_{m+1}^+ = \emptyset$). Let $\Q_i$ be the set of the segments in~$H_i$ of paths in~$\Q$. These are $U^-_i$--$\,U^+_{i+1}$ paths, one for each $Q\in\Q$, when $1<i<m$. We write $\T_i$ for the set of trivial paths in~$\Q_i$; when $1<i<m$, this is the set
 $$\T_i = \big\{\{v\}\mid v\in U^-_i\cap U^+_{i+1}\big\}\subseteq \Q_i\,.$$
 Note that $\T_i$ contains every path~$\{v\}$ with $v\in A$, and that $|\bigcup\T_i|\le |\Q| < 2\alpha$.

Deleting at most half the paths in~$\P$, we can ensure that for the remaining linkage $\P'\subseteq \P$ there is no $i<m$ such that both $w_i$ and $w_{i+1}$ are endpoints of a path in~$\P'$. Let ${I_1}\subset \{1,\dots,m\}$ be the set of those $i$ for which $w_i$ is the final vertex of a path in~$\P'$.

For each~$i\in {I_1}$, let $J_i$ denote the component of $H_i - U_i^- - U_{i+1}^+$ containing~$w_i$. Note that $J_i\cap\bigcup\T_i = \emptyset$ for each~$i$,%
   \COMMENT{}
   and that the $J_i$ are disjoint for different~$i\in {I_1}$. Let ${I_2}\subseteq {I_1}$ be the set of those $i\in {I_1}$ for which $J_i$ has at least $k(p-3)$ neighbours in~$\bigcup\T_i$, and put ${I_3}:= {I_1}\setminus {I_2}$. Let us show that
\begin{equation}\label{sizeI}
 |I_3|\ge k\left (\binom{p}{2} + 1\right)\binom{\alpha}{ p}.
\end{equation}

Suppose not; then $|{I_2}|\ge kp\binom{2\alpha}{k(p-3)}$, by~\eqref{sizeP}. For each $i\in {I_2}$, the at least $k(p-3)$ neighbours of $J_i$ in $\bigcup\T_i$ lie on different paths in~$\Q$. Since $|\Q|\le 2\alpha$, there is a set of $k(p-3)$ paths $Q$ in~$\Q$ and a set $I\subseteq {I_2}$ of order $kp$ such that for each of those $Q$ and every $i\in I$ we have $Q\cap H_i\in\T_i$ and the unique vertex in this graph sends an edge to~$J_i$. Contract each of these~$Q$ to one vertex, and contract each $J_i$ with $i\in I$ on to its vertex~$w_i$. Then each of these $kp$ vertices $w_i$ is adjacent to those $k(p-3)$ vertices contracted from paths in~$\Q$. Together with the $kp$ paths in $\P$ ending in these~$w_i$ and the cycles $C_1,\dots,C_{t+2}$ in our wall~$H$, we obtain a fan $F(kp, k(p-3))$ as in the proof of Lemma~\ref{lem:2}, contradicting our assumption that $G\not\Minor kK_p$. This proves~\eqref{sizeI}.

Let $\P''$ be the set of paths in $\P'$ ending in some $w_i$ with $i\in {I_3}$. For every $i\in {I_3}$, the graph $J_i$ has at most  $k(p-3)-1$ neighbours in~$\bigcup\T_i$. Our plan now is to find some fixed paths $Q^1,\dots, Q^p\in\Q$ and many indices $i\in {I_3}$, one for every edge in~$kK_p$, such that for each of these $i$ the segments $Q_i^j := Q^j\cap H_i$ are non-trivial%
   \COMMENT{}
   and we can connect two of them by a path through~$J_i$. (This will require some re-routing of $\Q_i$ inside~$H_i$.) Dividing the linkage $(Q^1,\dots, Q^p)$ into $k$ chunks kept well apart by the $k-1$ subgraphs~$H_i$ between them (in which all these paths have non-trivial segments; it is here only that we need the non-triviality of segments), and contracting the $p$ paths in each chunk to $p$ vertices, we shall thus obtain our desired $kK_p$ minor.

Let us begin by choosing the segments $Q_i^1,\dots, Q_i^p$ locally for each~$i\in {I_3}$, allowing the choice of $Q^1,\dots, Q^p$ to depend on~$i$. It will be easy later to find enough $i$ for which these choices agree. Let us prove the following:
\begin{equation}\label{reroute}
\begin{split}
&\emtext{For every $i\in {I_3}$ there are paths $Q^1,\dots,Q^p\in\Q$ with $Q_i^1,\dots,Q_i^p\in\Q_i\setminus\T_i$%
   \COMMENT{}
   such}\\
&\emtext{that for every choice of $1\le j < \ell \le p$ there is a linkage $(\hat Q_i^1,\dots,\hat Q_i^p)$ in $H_i$}\\
&\emtext{equivalent to $(Q_i^1,\dots,Q_i^p)$ for which $J_i - (\hat Q_i^1\cup\ldots\cup\hat Q_i^p)$ contains a path $R_i^{j,\ell}$}\\
&\emtext{from a vertex adjacent to~$\hat Q_i^j$ to a vertex adjacent to~$\hat Q_i^\ell$.}
\end{split}
\end{equation}

To prove~\eqref{reroute}, let $i\in {I_3}$ be given. Note that if any vertex $v$ of $J_i$ sends $p+1$ edges to $U_i^-\setminus\left(\{w_{i-1}\}\cup \T_i\right)$ or to~$U_{i+1}^+\setminus\left( \{w_{i+1}\}\cup \T_i\right)$, the proof of~\eqref{reroute} is immediate with~$R_i^{j,\ell} = \{v\}\,$: since $v$ lies on at most one of the $p+1$ non-trivial paths in $\Q$ to which it sends an edge, we can find $p$ such paths avoiding~$v$, no re-routing being necessary.%
   \COMMENT{}
So let us assume that this is not the case.

Consider the graph $J_i-w_i$. As $i\in I_3$, the vertex $w_i$ has fewer than $k(p-3)$
neighbours in $\bigcup\T_i$, fewer than $10p$ neighbours in~$G'_0$ (by definition of~$\P$), and at most $2p$ neighbours  in $(U_i^-\cup U_{i+1}^+) \setminus \left( \{w_{i-1}, w_{i+1}\} \cup \T_i\right)$. As $w_i$
has degree at least $k(p-3) + 14p+14$ in $G$, the graph $J_i - w_i$ is non-empty. By the same argument,%
   \COMMENT{}
\begin{equation}\label{density}
\delta(J_i - w_i) \ge \Big(k(p-3) + 14p+14\Big) - \Big(k(p-3)-1\Big) -  2p - 3 = 12(p+1)\,.
\end{equation}
 By Mader's theorem~\cite[Thm.~1.4.3]{DiestelBook05} and the main result from~\cite{thomas} (which says that $2s$-connected graphs of average degree at least $10s$ are $s$-linked), \eqref{density}~implies that $J_i-w_i$ has a ${(p+1)}$-linked subgraph~$H'_i$. In particular, $|H'_i|\ge 2p+2$. Let $Z_i$ consist of the vertices $w_{i-1}, w_i, w_{i+1}$ and the neighbours of $J_i$ in~$\bigcup\T_i$. As $i\in {I_3}$ we have $|Z_i|\le k(p-3)+2$, so $G - Z_i$ is still $2p$-connected. Since $H'_i\subseteq J_i-w_i$, the graph $H'_i$ has no vertex in~$Z_i$. By Menger's theorem, there are $2p$ disjoint paths in $G - Z_i$ from $H'_i$ to our wall~$H$. By definition of~$J_i$, their first vertices outside~$J_i$ lie in $U_i^-\cup U_{i+1}^+$ (recall that this set and $w_i$ together separate $H_i$ from~$H$ in~$G$),%
   \COMMENT{}
   and hence on a path in~$\Q_i\setminus\T_i$.%
   \COMMENT{}
   By Lemma~\ref{lem:comb}, there exists an $H'_i$--$(\Q_i\setminus\T_i)$ comb of at least $2p$ paths. Each path in $\Q$ meets at most two of them. Observation \ref{obs:subcomb} implies that we can find $p$ paths $Q^1,\dots,Q^p\in\Q$ such that $Q_i^1,\dots,Q_i^p\in\Q_i\setminus\T_i$ (as in~\eqref{reroute}) together with an $H'_i$--$\{Q_i^1,\dots,Q_i^p\}$ subcomb~$\RR$ meeting all of $Q^1,\dots,Q^p$. Let $\bar Q_i^q$ denote the $(\RR,H'_i)$-segment of $Q_i^q$, for each $q=1,\dots,p$; these segments are non-empty,%
   \COMMENT{}
   but they may be trivial.

We now define the paths $\hat Q_i^1,\dots,\hat Q_i^p$. For all $q$ whose $\bar Q_i^q$ is trivial we let $\hat Q_i^q = Q_i^q$. For those $q$ whose $\bar Q_i^q$ is non-trivial, we let $h_1^q\in H'_i$ be the starting vertex of the path $R_1^q\in\RR$ that ends on the first vertex of~$\bar Q_i^q$, and let $h_2^q\in H'_i$ be the starting vertex of the path $R_2^q\in\RR$ that ends on the last vertex of~$\bar Q_i^q$. Our aim is to link $h_1^q$ to $h_2^q$ in~$H'_i$ for each~$q$, but we must define $R_i^{j,\ell}$ at the same time. If $\bar Q_i^j$ is trivial, let $r^j\in H'_i$ be the starting vertex of the unique path $R^j\in\RR$ that ends on~$\bar Q_i^j$. If $\bar Q_i^j$ is non-trivial, let $r^j\in H'_i$ be a neighbour of $h_1^j$ in~$H'_i - \bigcup\RR$; such a neighbour exists, since~$H'_i$, being $(p+1)$-linked, is $(2p+1)$-connected~\cite[Ex.~3.22]{DiestelBook05}.%
   \COMMENT{}
   Define $r^\ell$ analogously.%
   \COMMENT{}
   Now choose a linkage in $H'_i$ consisting of a path $R = r^j\dots r^\ell$ and paths $R^q = h_1^q\dots h_2^q$ for all those $q$ such that $\bar Q_i^q$ is non-trivial. For these~$q$, let $\hat Q_i^q$ be obtained from $Q_i^q$ by replacing $\bar Q_i^q$ with $R_1^q\cup R^q\cup R_2^q$. If both $\bar Q_i^j$ and $\bar Q_i^\ell$ are trivial, let $R_i^{j,\ell}$ be the interior of the path $R^j\cup R\cup R^\ell$. If $\bar Q_i^j$ is trivial but $\bar Q_i^\ell$ is not, let $R_i^{j,\ell}$ be the path $R^j\cup R$ minus its first vertex. If $\bar Q_i^\ell$ is trivial but $\bar Q_i^j$ is not, let $R_i^{j,\ell}$ be the path $R\cup R^\ell$ minus its last vertex. If neither $\bar Q_i^j$ nor $\bar Q_i^\ell$ is trivial, let $R_i^{j,\ell}$ be the path~$R$. This completes the proof of~\eqref{reroute}.

By~\eqref{sizeI}, we can find a set ${I_4}\subseteq {I_3}$ of $k\left({p\choose2}+1\right)$ indices~$i$ in~${I_3}$ for which the choice of paths $Q^1,\dots,Q^p$ in~\eqref{reroute} coincides. (Recall that these paths are always chosen from the original vortex linkage of order~$\le\alpha$, since the trivial paths $\{v\}$ with $v\in A$ which we added later lie in every~$\T_i$.) For notational reasons only, let $\hat p := {p\choose2}$. Divide ${I_4}$ into $k$ segments
 $$(i_1^1,\dots,i_{\hat p}^1,i^1)\,,\ \dots\,,\ (i_1^k,\dots,i_{\hat p}^k,i^k)$$
 of length ${p\choose2}+1$. For every upper index $n=1,\dots,k$ contract in each of $Q^1,\dots,Q^p$ the segment from $H_{i_1^n}$ to~$H_{i_{\hat p}^n}$ (inclusive) to a vertex, and make these vertices into a $K_p$ minor using the paths $R_i^{j,\ell}$ from~\eqref{reroute} for subdivided edges,%
   \COMMENT{}
   one for each $i=i_1^n,\dots,i_{\hat p}^n$. Note that the $k$ instances of a $K_p$ minor thus obtained are disjoint, because they are `buffered' by the unused segments of the paths $Q^1,\dots,Q^p$ in $H_{i^n}$ for $n=1,\dots,k-1$.

%%%%%%%%%%%%%%%%%%%%%%%%%%%%%%%%%%%%%%%%%%%%%

\section{Tightness of the connectivity bound}\label{sec:tight}

The goal of this section will be to provide a construction of a graph $G_{n, k, p}$ for
all integers $p \ge 5$, $k \ge p$, and $n \ge 1$, such that the graph $G_{n, k, p}$
does not contain $k$ disjoint instances of $K_p$ as a minor, nor does the
graph $G_{n, k, p}$ contain a subset $X$ of vertices with $|X| \le n$ such that
$G - X$ does not contain $K_p$ as a minor. Moreover, we will construct
such a graph $G_{n, k, p}$ that is $(k(p-3) - \frac{(p-3)(p-4)}{2}-6)$-connected.
This will imply that the connectivity bound obtained in Theorem \ref{thm:main} is
best possible for all fixed $p$, $p \ge 5$, up to an additive constant.

For the remainder of this section, we fix $p \ge 5$. Let $\Sigma$ be an orientable
surface of
minimum genus in which $K_p$ embeds. The Euler genus of $\Sigma$ is at most $\frac{(p-3)(p-4)}{6}+1$ (see \cite{MT}).

We will use the following facts (see \cite{MT} for details):

\begin{Lemma}
\label{obs1}
There are at most $\frac{(p-3)(p-4)}{6}+1$ disjoint instances of $K_5$-minors in a graph which is embedded in the surface $\Sigma$.  Moreover,
suppose there are connected subgraphs $B_1,\dots,B_q$ in a graph embedded in the surface $\Sigma$,
such that
each $B_i$ contains a $K_5$-minor.  Assume there is a vertex $v$ such that
$v \in V(B_i)$ for each $i$ and $(V(B_i) - \{v\}) \cap (V(B_j) - \{v\}) = \emptyset$ for $i\not=j$.
Then $q \leq \frac{(p-3)(p-4)}{6}+1$.
\end{Lemma}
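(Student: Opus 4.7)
My plan is to bound $q$ in both parts by the Euler genus $\bar g(\Sigma)$ of the ambient surface, exploiting two standard facts: Euler genus is monotone under the minor relation, and $K_5$ is non-planar, so $\bar g(K_5) \ge 1$. Combined, any graph that contains a $K_5$ minor has Euler genus at least $1$. Together with the assumed bound $\bar g(\Sigma) \le \frac{(p-3)(p-4)}{6}+1$, this reduces both statements to a question about additivity of Euler genus over the union $B := B_1 \cup \dots \cup B_q$.

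For the first assertion I would form the vertex-disjoint union $B$ of the given pairwise disjoint subgraphs. Since $B$ is a subgraph of a graph embedded in $\Sigma$, it embeds in $\Sigma$; and since Euler genus is (trivially) additive over connected components of a graph,
\[
\bar g(B) \;=\; \sum_{i=1}^q \bar g(B_i) \;\ge\; q.
\]
Combined with $\bar g(B) \le \bar g(\Sigma)$, this gives $q \le \frac{(p-3)(p-4)}{6}+1$.

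For the second assertion the $B_i$ are connected and pairwise meet only in the vertex $v$, so in the union $B := B_1 \cup \dots \cup B_q$ the vertex $v$ is a cut vertex whose removal separates the $B_i - v$ (the case $q = 1$ being trivial). Here I would invoke the classical Battle--Harary--Kodama--Youngs additivity theorem for orientable genus over blocks, and its Euler-genus analogue (Stahl--Beineke), applied iteratively at $v$. This yields $\bar g(B) = \sum_i \bar g(B_i) \ge q$, and again $q \le \bar g(\Sigma) \le \frac{(p-3)(p-4)}{6}+1$.

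The only step with any content is the appeal to additivity of Euler genus at a one-vertex amalgamation in the second part; everything else is a direct application of minor-monotonicity of Euler genus together with the hypothesis on $\Sigma$. If one wishes to avoid citing the BHKY theorem, an alternative would be to argue directly: in any embedding of $B$ in $\Sigma$, a small disc around $v$ separates the surface into regions each containing one $B_i - v$, and summing Euler-characteristic contributions of those regions (corrected for the shared boundary around $v$) reproduces the additivity statement, but this is essentially a proof of the BHKY result for our special case.
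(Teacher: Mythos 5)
The paper never proves this lemma itself; it simply writes ``see \cite{MT} for details,'' so there is no in-paper argument to compare yours against. Your overall strategy -- reduce to a genus-additivity statement at a cut vertex, observe that each $B_i$ has positive genus because it has a $K_5$ minor, and compare with the genus of $\Sigma$ -- is indeed the standard way to establish both parts of the lemma, and is presumably what the reference contains.

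There is, however, a real gap in the way you quantify the argument. You run the proof with Euler genus $\bar g$ and then invoke the paper's sentence ``the Euler genus of $\Sigma$ is at most $\frac{(p-3)(p-4)}{6}+1$.'' That sentence is not actually correct for all $p\ge 5$. Since $\Sigma$ is the minimum-genus \emph{orientable} surface containing $K_p$, its Euler genus is $2\big\lceil (p-3)(p-4)/12\big\rceil$. For $p=5$ this is $2$ (the torus), yet $\frac{(p-3)(p-4)}{6}+1=\frac{4}{3}<2$; the same failure recurs whenever $(p-3)(p-4)\equiv 2\pmod{12}$ (e.g.\ $p=14$). So your chain $q\le \bar g(B)\le \bar g(\Sigma)\le \frac{(p-3)(p-4)}{6}+1$ breaks at the last step, and for $p=5$ you only get $q\le 2$ while the lemma asserts $q\le 1$. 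The fix is to drop Euler genus entirely and argue with the orientable genus $\gamma$: $\gamma$ is minor-monotone, $\gamma(K_5)=1$, and the Battle--Harary--Kodama--Youngs theorem (which is stated precisely for orientable genus, so you need not worry about whether its Euler-genus analogue holds) gives $\gamma(B)=\sum_i\gamma(B_i)\ge q$ both for disjoint unions and for the one-vertex amalgamation. Since $B$ embeds in $\Sigma$, $q\le \gamma(\Sigma)=\big\lceil (p-3)(p-4)/12\big\rceil\le \frac{(p-3)(p-4)}{12}+1\le \frac{(p-3)(p-4)}{6}+1$, which is what the lemma claims (and in fact something stronger). As a minor stylistic point, additivity of (Euler or orientable) genus over components is not ``trivial'': the easy direction is $\bar g(\bigsqcup B_i)\le\sum\bar g(B_i)$ via connected sums, but the lower bound is exactly the content of the BHKY-type theorem you cite later, so the first part needs that theorem just as much as the second.
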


Lemma \ref{obs1} can be generalized as follows (again, see \cite{MT} for details):

\begin{Lemma}
\label{obs2}
Suppose there are $q$ disjoint minors isomorphic to $K_{l_1},K_{l_2},\dots,K_{l_q}$ ($l_i \geq 5$ for
$i=1,\dots,q$), respectively, in a graph $G$ that is embedded in the surface $\Sigma$.
Then $\Sigma_{i=1}^{q} \left\lceil\frac{(l_i-3)(l_i-4)}{6}\right\rceil \leq \frac{(p-3)(p-4)}{6} +1$.
Suppose there are connected graphs $B_1,\dots,B_q$ in a graph that is embedded into the surface $\Sigma$, such that
each $B_i$ contains a $K_{l_i}$-minor (with $l_i \geq 5$ for $i=1,\dots,q$), and there is a vertex $v$ such that
$v \in V(B_i)$ for each $i$ and $(V(B_i) - \{v\}) \cap (V(B_j) - \{v\}) = \emptyset$ for $i\not=j$.
Then $\Sigma_{i=1}^{q}\left \lceil\frac{(l_i-3)(l_i-4)}{6}\right\rceil \leq \frac{(p-3)(p-4)}{6} +1$.
\end{Lemma}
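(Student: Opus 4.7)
The plan is to deduce both statements from two standard pieces of topological graph theory: (i) the Ringel--Youngs formula $\gamma(K_n)=\lceil (n-3)(n-4)/6\rceil$ for the Euler genus~$\gamma$ of a complete graph, and (ii) the additivity of Euler genus, both over connected components and over blocks of a graph (see~\cite{MT} for details). Since taking minors does not increase Euler genus, and each $B_i$ (respectively each subgraph realizing the $i$th $K_{l_i}$-minor in the first statement) contains $K_{l_i}$ as a minor, one obtains the uniform lower bound $\gamma(B_i)\geq\gamma(K_{l_i})=\lceil (l_i-3)(l_i-4)/6\rceil$ in every case.

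For the first assertion, let $H_i\subseteq G$ be the subgraph consisting of the branch sets and connecting edges of the $i$th $K_{l_i}$-minor, and set $H=H_1\cup\cdots\cup H_q$. The $H_i$ are vertex-disjoint, so they lie in distinct connected components of~$H$. Additivity of Euler genus over components then gives $\gamma(H)=\sum_i\gamma(H_i)$, while $H\subseteq G$ embedded in~$\Sigma$ gives $\gamma(H)\leq\gamma(\Sigma)\leq (p-3)(p-4)/6+1$. Combining these, $\sum_{i=1}^{q}\lceil (l_i-3)(l_i-4)/6\rceil\leq (p-3)(p-4)/6+1$, as required.

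For the second assertion, let $H=B_1\cup\cdots\cup B_q$. Because the $B_i$ meet pairwise only in~$v$ and each $B_i$ is connected, $v$ is a cut-vertex of~$H$ (when $q\geq 2$), and the connected components of $H-v$ are precisely the graphs $B_i-v$. Consequently every block of~$H$ is contained in a single~$B_i$, so the block decomposition of~$H$ refines into the block decompositions of the individual~$B_i$. Block-additivity of Euler genus then yields
\[
\gamma(H)\;=\;\sum_{\text{blocks }B\text{ of }H}\gamma(B)\;=\;\sum_{i=1}^{q}\;\sum_{\text{blocks }B\text{ of }B_i}\gamma(B)\;=\;\sum_{i=1}^{q}\gamma(B_i),
\]
and since $H\subseteq G$ is embedded in~$\Sigma$ we again conclude $\sum_{i=1}^{q}\lceil (l_i-3)(l_i-4)/6\rceil\leq\gamma(\Sigma)\leq (p-3)(p-4)/6+1$.

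The main point is simply to invoke the correct form of the additivity theorem; both the component-wise and the block-wise additivity of Euler genus are classical (the latter is the Battle--Harary--Kodama--Youngs theorem, extended to Euler genus), and this is exactly what the authors' pointer to~\cite{MT} covers, so no essentially new argument is needed beyond correctly setting up the block-decomposition in the second case.
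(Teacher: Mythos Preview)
The paper does not actually prove this lemma: it is stated as a fact with the pointer ``see \cite{MT} for details'', so there is no in-paper proof to compare against. Your argument via the Ringel--Youngs value of the Euler genus of~$K_n$ together with additivity of Euler genus (over components for the first assertion, over blocks for the second) is correct and is precisely the standard argument one would extract from~\cite{MT}.

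One small imprecision: in the second part you write that ``the connected components of $H-v$ are precisely the graphs $B_i-v$'', but nothing in the hypotheses guarantees that $B_i-v$ is connected. This does not matter for your proof, since the claim you actually need---that every block of $H$ lies inside a single~$B_i$---follows directly from the fact that $v$ separates $B_i\setminus\{v\}$ from $B_j\setminus\{v\}$ in~$H$ for $i\neq j$, regardless of the connectivity of~$B_i-v$. With that phrasing adjusted, the block decomposition of~$H$ is indeed the disjoint union of the block decompositions of the~$B_i$, and block-additivity of Euler genus gives $\gamma(H)=\sum_i\gamma(B_i)$ as you claim.
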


We are almost ready to construct the graph $G(n,k,p)$.  We first recall that the \emph{face-width} of a graph embedded in a surface is the minimum number of times a non-contractable loop intersects the embedded graph taken over all possible non-contractable loops.  The following observation follows immediately from the definition of face-width.
\begin{Observation}\label{obs3}
Let $G$ be a graph embedded in a surface $\Gamma$ with face-width $k$.  Let $X$ be a set of $t$ vertices in $G$.  Then $G-X$ is embedded in $\Gamma$ with face width at least $k-t$.
\end{Observation}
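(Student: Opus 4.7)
The plan is to argue directly from the definition of face-width by proving the contrapositive. Suppose $\text{fw}(G - X) < k - t$, and let $\gamma$ be a non-contractible simple closed curve in $\Gamma$ witnessing this: $\gamma$ meets the embedded image of $G - X$ only at vertices, and passes through fewer than $k - t$ of them. My goal is to homotope $\gamma$ to a non-contractible closed curve $\gamma'$ that meets $G$ only at vertices and passes through fewer than $k$ of them, contradicting $\text{fw}(G) \ge k$.

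When $\gamma$ is interpreted with respect to the larger graph $G$, the only intersections beyond those with $G - X$ come from (i) pass-throughs of $\gamma$ at vertices of $X$, and (ii) transverse crossings of $\gamma$ with edges of $G$ incident to $X$. For each edge $uv$ of $G$ with $v \in X$ at which $\gamma$ has a transverse crossing, I would slide the crossing point along the edge $uv$ to the endpoint $v$ and push slightly into an adjacent face. Such a local homotopy replaces the edge-interior crossing with a pass-through at the vertex $v \in X$ and, performed in a sufficiently small neighbourhood of the edge, introduces no new intersections with other edges of $G$. After carrying this out for every such crossing, the resulting curve $\gamma'$ is homotopic to $\gamma$ (hence still non-contractible and simple) and meets $G$ only at vertices.

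Now the counting is immediate: every vertex of $G$ lying on $\gamma'$ is either a vertex of $G - X$ already lying on $\gamma$ (fewer than $k - t$ of these, by assumption) or a vertex of $X$ (at most $|X| = t$ of these). Hence $\gamma'$ passes through fewer than $(k - t) + t = k$ vertices of $G$, which contradicts $\text{fw}(G) \ge k$ and proves the observation. The only step that requires any care is the local push-off in (ii), where one must check that sliding transverse edge crossings to endpoints in $X$ does not inadvertently introduce new crossings with other edges; this is standard surface topology and the reason the authors can call the result immediate.
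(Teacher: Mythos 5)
The paper gives no proof of this observation (it is asserted to follow immediately from the definition of face-width), so your argument fills in a gap rather than being compared to an existing one. The argument is correct: the only points at which a witnessing curve $\gamma$ for the face-width of $G-X$ can meet $G$ beyond $V(G-X)$ are vertices of $X$ and interior points of edges incident to $X$ (all of which lie inside faces of the embedded $G-X$, which is why there are no other possible intersections), and sliding each such edge-crossing along its edge to an endpoint in $X$ produces a homotopic curve $\gamma'$ meeting $G$ only at vertices, in at most $|\gamma\cap(G-X)|+t<(k-t)+t=k$ of them, contradicting that $G$ has face-width at least $k$. The one imprecision is your parenthetical claim that $\gamma'$ is ``still non-contractible and simple'': non-contractibility is preserved by homotopy, but simplicity need not be, since if $\gamma$ crosses several edges incident to the same $v\in X$, or crosses a single such edge more than once, the slid curve passes through $v$ more than once. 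This is harmless here---the face-width bound uses only the size of $\gamma'\cap G$, which counts $v$ once, and in any case one may pass to a simple non-contractible closed curve with no larger intersection with $G$ by shortcutting at self-crossings---but the remark as written is not quite justified.
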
For a further discussion of face-width, we refer to \cite{MT}.  We will need the following result.
\begin{Theorem}[\cite{RS7}]\label{thm:rs7}
Let $t \ge 5$ be a positive integer and let $\Gamma$ be a surface in which $K_t$ can be embedded.  Then there exists a value $r = r(\Gamma, t)$ such that every graph embedded in $\Gamma$ with face-width $r$ contains $K_t$ as a minor.
\end{Theorem}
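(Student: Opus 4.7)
The plan is to fix a $2$-cell embedding $\psi\colon K_t\hookrightarrow\Gamma$, so that each vertex $v$ of $K_t$ corresponds to a point $\psi(v)\in\Gamma$ and each edge $e=uv$ to a simple curve $\gamma_e\subseteq\Gamma$ meeting the other curves and vertex-points only at its endpoints, and then to realise this drawing inside $G$ as a topological minor by ``thickening'' along $\psi$. Face-width is the natural obstruction here: when it is large, $G$ locally covers $\Gamma$ densely everywhere, so any fixed drawing of $K_t$ in $\Gamma$ should be traceable by vertex-disjoint subgraphs of $G$.

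First I would use the face-width hypothesis to find, for each vertex $v$ of $K_t$, a closed disc $D_v\subseteq\Gamma$ around $\psi(v)$, with the $D_v$ pairwise disjoint and each curve $\gamma_e$ meeting only $D_u\cup D_w$ for its endpoints $u,w$. A standard consequence of large face-width (see \cite{MT}) is that inside each $D_v$ one finds a family of nested cycles $C_v^1\supseteq\cdots\supseteq C_v^{t-1}$ of $G$, each bounding a subdisc of $D_v$, together with a vertex of $G$ in the innermost subdisc that can be linked to $C_v^{t-1}$ by $t-1$ internally disjoint radial paths. This furnishes a small ``wall around $\psi(v)$'' with $t-1$ usable entry points on $C_v^1$.

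Second, for each edge $e=uv$ of $K_t$ I would route a path $P_e$ in $G$ from $C_u^1$ to $C_v^1$ following $\gamma_e$, chosen inside a thin tubular neighbourhood of $\gamma_e$ that lies in $\Gamma\setminus\bigcup_{w\neq u,v}D_w$. By choosing $r$ large enough in terms of $t$ and $\Gamma$, Menger's theorem combined with the face-width bound provides such a path even after the previously chosen paths $P_{e'}$ have been deleted, because every narrow cross-cut of the tube meets $G$ in many vertices. Finally, inside each $D_v$ I link the $t-1$ entry points of the paths $P_e$ at $v$ to the nested cycles $C_v^1,\ldots,C_v^{t-1}$ and on to the central vertex, producing a connected branch set $B_v$; the branch sets $(B_v)_{v\in V(K_t)}$ together with the middle portions of the paths $P_e$ then exhibit a $K_t$-minor in $G$.

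The main obstacle is arranging the $\binom{t}{2}$ paths $P_e$ to be pairwise disjoint while still entering each $D_v$ in the cyclic order on its boundary prescribed by $\psi$. This forces $r$ to be chosen so that no two curve-tubes can be compelled to share vertices of $G$, which is precisely what face-width controls and why $r$ must depend on both $\Gamma$ and $t$. Once the topological picture is set up, building the $B_v$ reduces to a comb-and-linkage argument essentially of the type used in Lemma \ref{lem:comb} together with Menger's theorem inside each $D_v$.
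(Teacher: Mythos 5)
The paper does not prove this theorem; it is stated as a citation to Robertson and Seymour's Graph Minors VII. So there is no internal proof to compare against. Judged on its own terms, your sketch has a genuine gap at its core.

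The step that fails is the claim that ``every narrow cross-cut of the tube meets $G$ in many vertices'' and, more generally, the tacit assumption that high face-width makes $G$ locally dense near any prescribed drawing of $K_t$. Face-width is a constraint only on \emph{non-contractible} closed curves: it says nothing about contractible arcs or discs. A graph embedded in $\Gamma$ with arbitrarily large face-width can still have large contractible regions containing no vertices of $G$ at all. Consequently, there is no reason that a predetermined point $\psi(v)$ lies near any vertex of $G$, so your nested cycles $C_v^1\supseteq\cdots\supseteq C_v^{t-1}$ inside $D_v$ need not exist, and there is no reason that a thin cross-cut of a tube around $\gamma_e$ meets $G$ at all, let alone in enough vertices to run a Menger argument after deleting the earlier paths $P_{e'}$. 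The whole ``thicken a fixed drawing of $K_t$'' strategy therefore does not get off the ground.

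To make this kind of argument work one must do something genuinely harder: either adapt the target drawing to where $G$ actually lives, or first extract from $G$ (using the face-width hypothesis) a canonical subgraph that triangulates the surface in a controlled way (a surface analogue of a large grid), and then observe that any graph embeddable in $\Gamma$, in particular $K_t$, is a minor of such a canonical graph once it is fine enough. This is essentially the route Robertson--Seymour and Mohar--Thomassen take, and it requires substantial surface surgery and induction on genus; it is not recovered by Menger's theorem plus the intuition that high face-width means local density. Your instinct that $r$ must depend on both $\Gamma$ and $t$ and that cyclic orders at the vertices are the combinatorial crux is sound, but the key lemma that would supply the local structure is missing, and the reason you give for it is false.
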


Fix $r$ to be the value given by Theorem \ref{thm:rs7} to ensure a graph embedded in $\Sigma$ contains $K_p$ as a minor.  We first construct a graph $G'$ which is embedded
in the surface $\Sigma$, with the following properties:

\begin{enumerate}
\item The face-width of $G'$ embedded in $\Sigma$ is at least $n+r$.
\item There is a cycle $C$ in $G'$ which bounds a disk $D$ in $\Sigma$, and the set of vertices on the outer boundary of the disk
$D$ is defined by $V(D)$. We assume that no vertex, except for the vertex set $V(D)$, exists
inside the disk $D$.
\item
For each vertex $v$ outside the disk $D$, there are at least
$k(p-3) - \frac{(p-3)(p-4)}{2}-6$ internally disjoint paths from $v$ to $V(D)$ in $G'$.
\item
$G'$ is 3-connected, and hence each vertex in $V(D)$ has degree at least 3.
\end{enumerate}

A graph $G'$ with the desired embedding is known to exist \cite{mohar};
we outline such a construction.  We begin with a 3-connected graph $H$ allowing a \emph{closed 2-cell embedding} in $\Sigma$, in other words, a 3-connected graph $H$ which embeds in $\Sigma$ so that the topological closure of every facial region is homeomorphic to the closed disk.  Consider the following operation for a fixed facial region $F$.  The region $F$ is bounded by a cycle $C_F$ in $H$.  We subdivide every edge of $C$ and add a new vertex embedded in the region $F$ adjacent to every vertex on the subdivided cycle $C$.  The resulting graph is 3-connected and the new embedding is a closed 2-cell embedding as well.  Note that if we perform this operation on every facial region, the resulting graph will be embedded in $\Sigma$ with face width at least twice that of the original embedding.  Thus by repeatedly performing the operation, we find a $3$-connected graph $H_1$ along with a closed 2-cell embedding in $\Sigma$ satisfying 1 above.

Given the embedded graph $H_1$, let $H^*$ be the dual graph with vertex set equal to the set of facial regions and two facial regions are adjacent in $H^*$ if their boundary cycles share an edge.  Note that by the 3-connectivity of $H_1$, the graph $H^*$ is a simple connected graph.  Let $T$ be a spanning tree of $H^*$, and fix a root $R$ of the tree $T$.  Let $F \in V(T) -R$ be a facial region of $H_1$ forming a leaf in $T$.  Let $C_F$ be the boundary cycle of $F$, and let $e_F$ be the edge of $H_1$ shared with the neighboring facial region in $T$.  We subdivide the edge $e_F$ sufficiently many times to add $k(p-3) - \frac{(p-3)(p-4)}{2} - 6$ neighbors in the subdivided $e_F$ for every vertex of $C_F - e_F$.  Given that the region is homeomorphic to the disc, it is clear that we can add the edges maintaining the embedding in $\Sigma$.  Moreover, we maintain 3-connectivity of the graph.  In the resulting graph, every vertex of $C_F - e_F$ will have the desired large degree.  We repeatedly delete the leaf $F$ from the tree $T$ and apply the same process to a leaf of $V(T) - F$ until only the vertex $R$ remains.  Let $G'$ be the resulting graph.  We claim $G'$ satisfies $2-4$ above with the disc $D$ being the boundary cycle of the facial region $R$.  The properties $2$ and $4$ follow easily from the construction.  To see that we satisfy $3$ as well, pick a vertex of $v \in V(G') \setminus V(D)$.  We can find the desired paths from $v$ to $V(D)$ by looking at the path of facial regions in $T$ connecting $v$ to $V(D)$.   At each facial region along the path, a given vertex has in fact $k(p-3) - \frac{(p-3)(p-4)}{2} - 6$ neighbors on the next facial region.  This completes our outline of the construction of $G'$.

We now define $G = G(n, k, p)$ as follows.  Let $Z$ be a set of $k(p-3)-\frac{(p-3)(p-4)}{2}-6$ vertices.  The vertex set of $G$ will be $Z \cup V(G')$, and the edge set will the union of the edges of $G'$ along with every possible edge of the form $zd$ for all $z \in Z$ and $d \in D$.  We will see that the graph $G$ satisfies the desired properties.

We first claim that $G$ is $(k(p-3) - \frac{(p-3)(p-4)}{2}-6)$-connected.
Assume there exists a cutset $X \subseteq V(G)$ dividing the graph into at least two connected pieces with $X \le k(p-3) - \frac{(p-3)(p-4)}{2}-7$.  Let $u$ and $v$ be two vertices such that $u$ and $v$ are in distinct components of $G - X$.  There exists at least one element $z \in Z$ contained in $G - X$, and so it follows that $V(D) \setminus X$ is contained in a single component of $G-X$.  Given that there exist $k(p-3) - \frac{(p-3)(p-4)}{2}-6$ internally disjoint paths from each of $v$ and $u$ to $V(D)$, it follows that $V(D)\setminus X$, $u$, and $v$ are all contained in the same component of $G-X$, contrary to our choice of $u$ and $v$.

We now observe that there is no vertex set $X$ of order at most $n$ in $G$
such that $G-X$ does not contain a $K_p$-minor. By Observation \ref{obs3}, for any vertex set $X$ of order $n$,
the graph $G-X$ has face-width at least $r$.
It follows that $G-X$ contains $K_p$ as a minor by Theorem \ref{thm:rs7}.

As a final step, we now prove that $G$ cannot contain $k$ disjoint instances of $K_p$-minors
when $k \ge p$.
Suppose, to reach a contradiction, that $G$ contains pairwise disjoint subgraphs $H_1,\dots,H_k$, each of which contains $K_p$ as a minor.
Recall that $Z$ is the set of vertices adjacent every vertex of $D$, and $|Z|=k(p-3)-\frac{(p-3)(p-4)}{2}-6$.
We are now interested in all of the instances $H_1,\dots,H_k$
that contain at most $p-4$ vertices of $Z$. We fix the value $t$, and possibly re-number the subgraphs $H_i$ for $1\le i \le k$ such that $H_i$ contains at most $p-4$ vertices of $Z$ if and only if $1 \le i \le t$.  We let $l_i$ be defined to be $|V(H_i) \cap Z|$ for $1 \le i \le t$.  It follows immediately that:
\begin{equation*}
\begin{split}
|Z| & = \sum_1^t l_i + \sum_{i = t+1}^k |V(H_i) \cap Z| \\
& \ge \sum_1^t l_i + (k-t)(p-3) \\
& \ge \sum_1^t l_i + k(p-3) -tp +3t
\end{split}
\end{equation*}
If we combine the resulting inequality with the bound on $|Z|$, we conclude that
\begin{equation*}
\sum_1^t(p - l_i -3) \ge \frac{(p-3)(p-4)}{2}+6.
\end{equation*}

We now define a new graph $\overline{G}$ to be the graph $G'$ embedded in $\Sigma$ with an additional vertex $x$ attached to every vertex of $D$.  It is clear that the graph $\overline{G}$ embeds in $\Sigma$ as well.  We also define $\overline{H}_i$ for $1 \le i \le t$ to be the subgraph of $\overline{G}$ formed by $H_i \cap G'$ and the vertex $x$.  Observe that $\overline{H}_i$ contains a $p-l_i + 1$ clique minor.  This follows as every branch set of $H_i$ which does not intersect $Z$ remains a connected branch set of $\overline{H}_i$, and we form one additional branch set consisting of the union of the remaining branch sets of the clique minor in $H_i$ along with the vertex $x$.  Note that by our choice of $H_i$, $p-l_i +1 \ge 5$ for $1 \le i \le t$.

We now apply Lemma \ref{obs2} to the subgraphs $\overline{H}_i$, $1 \le i \le t$, of the graph $\overline{G}$.   It follows that:
\begin{equation*}
\begin{split}
\frac{(p-3)(p-4)}{6} + 1 & \ge \sum_1^t \left \lceil \frac{(p-l_i -2)(p-l_i-3)}{6} \right \rceil \\
& \ge \sum_1^t \left \lceil \frac{1}{3} (p-l_i - 3) \right \rceil \\
& \ge \sum_1^t  \frac{1}{3} (p-l_i - 3).
\end{split}
\end{equation*}
However, given our lower bound on $\sum_1^t (p-l_i -3)$, we now arrive at a contradiction.

This completes the proof that there exists a graph $G_{n,k,p}$ which is $(k(p-3)+\frac{(p-3)(p-4)}{2}-6)$-connected graph such that for
all integers $n \ge 1$, $p \ge 5$, $k \ge p$, the graph $G_{n, k, p}$
does not contain $k$ disjoint instances of $K_p$ as a minor, nor does it
contain a subset $X$ of vertices with $|X| \le n$ such that
$G(n,k,p) - X$ does not contain $K_p$ as a minor.

{\footnotesize

}

\begin{thebibliography}{99}

\bibitem{BKMM}
T.~B\"ohme, K.~Kawarabayashi, J.~Maharry and B.~Mohar,
Linear connectivity forces large complete bipartite graph minors,
{\it J. Combin. Theory Ser. B},  {\bf 99} (2009), 557--582.

\bibitem{JORG1}
K. Kawarabayashi, S. Norine, R. Thomas, P. Wollan,
$K_6$ minors in $6$-connected graphs of bounded tree width,
{\it submitted}.

\bibitem{JORG2}
K. Kawarabayashi, S. Norine, R. Thomas, P. Wollan,
$K_6$ minors in large $6$-connected graphs,
{\it submitted}.

\bibitem{structure_thm}
R.~Diestel, K.~Kawarabayashi, Th.~M\"uller, P.~Wollan,
On the excluded-minor structure theorem for graphs of large tree-width,
{\tt http://arxiv.org/abs/0910.0946}.

\bibitem{DiestelBook05}
R.~Diestel, \emph{Graph {T}heory \emph{(3rd edition)}}, Springer-Verlag, 2005,
  \\ Electronic edition available at: {\tt
  http://diestel-graph-theory.com/GrTh.html}.

\bibitem{erdosposa} P. Erd\H{o}s and L. P\'osa,
{On the maximal number of disjoint circuits of a graph,
{\em Publ. Math. Debrecen} {\bf 9} (1962), 3--12.}

\bibitem{ft} J.-O. Fr\"ohlich and T. M\"uller, 
Linear Connectivity Forces Large Complete Bipartite Minors: the Patch for the Large Tree-Width Case, {\tt http://arxiv.org/abs/0906.2568}.

\bibitem{mohar}
B.~Mohar,
{\it personal communication}.

\bibitem{MT}
B.~Mohar and C.~Thomassen,
Graphs on Surfaces,
{Johns Hopkins University Press, Baltimore, MD, 2001.}

\bibitem{RS5}
N. Robertson and P. Seymour,
Graph Minors V: Excluding a planar graph,
{\it J. Combin. Theory, Ser. B} {\bf 41}, (1986) pp.92 - 114.

\bibitem{RS7}
N. Robertson and P. D. Seymour, Graph
Minors. VII. Disjoint paths on a surface, {\it J. Combin. Theory, Ser. B}, {\bf 45} (1988), 212--254.

\bibitem{RS16}
N. Robertson and P. D. Seymour, Graph Minors. XVI. Excluding a non-planar graph
{\it J. Combin. Theory, Ser. B} {\bf 89} (2003) 43-76.

\bibitem{RS17} N. Robertson and P. Seymour,
Graph Minors. XVII. Taming a vortex,
{\it J. Combin. Theory, Ser. B} {\bf 77}, (1999) pp. 162 - 210.

\bibitem{RS21} N. Robertson and P. Seymour,
Graph Minors. XXI. Graphs with unique linkages, {\it J. Combin. Theory, Ser. B}, {\bf 99}, 583--616.

\bibitem{thomas} R. ~Thomas and P. Wollan,
An improved linear edge bound for graph linkages,
{\em Europ.\ J. Combinatorics,} {\bf 26} (2005) 309 - 324.


\end{thebibliography}
\end{document}